\documentclass[10pt,reqno]{amsart}
\pdfoutput=1
\usepackage{graphicx}
\usepackage{indentfirst,csquotes}
\topmargin= .2cm
\textheight= 22cm
\textwidth= 36cc
\baselineskip=16pt
\evensidemargin= .5cm
\oddsidemargin= .5cm
\setcounter{secnumdepth}{4}
\usepackage[colorlinks = true,
            linkcolor = blue,
            urlcolor  = blue,
            citecolor = blue,
            anchorcolor = blue]{hyperref}
\usepackage{amssymb,amsthm,amsmath}
\usepackage{xcolor,paralist,titlesec,fancyhdr,etoolbox}
\usepackage{algorithm,algpseudocode}
\allowdisplaybreaks

\newtheorem{theorem}{Theorem}[]

\newtheorem{lemma}[theorem]{Lemma}

\newtheorem{corollary}[theorem]{Corollary}

\newcommand{\myceil}[1]{\left \lceil #1 \right \rceil }
\newcommand{\myfloor}[1]{\left \lfloor #1 \right \rfloor }

\titleformat{\section}{\Large\bfseries}
\titlespacing{\section}{\thesection. }

\hypersetup{ colorlinks=true, linkcolor=black, filecolor=black, urlcolor=black }

\usepackage{lipsum}

\begin{document}

\author{Yann Dijoux}
\address{Yann Dijoux, LIST3N Research Unit, Université de Technologie de Troyes, France}
\email[Y. Dijoux]{yann.dijoux@utt.fr}
\date{\today}
\title{Chebyshev polynomials involved in the Householder's
method for square roots}

\let\thefootnote\relax
\footnotetext{MSC2020: Primary 00A05, Secondary 00A66.} 

\begin{abstract}
  The Householder's method is a root-find algorithm which is a natural extension of the methods of Newton and Halley. 	The current paper mostly focuses on approximating the square root of a positive real number based on these methods. The resulting algorithms can be expressed using Chebyshev polynomials. An extension to the nth root is also proposed.	 
\end{abstract} 

\maketitle

\section{Motivation}

The Babylonian method \cite{Kosheleva2009}, also called Heron's method \cite{Fowler1998366}, is a particular case of the Newton's method \cite{NewtonPol} in order to compute the square root of a positive real number $x$. The method consists of defining  a sequence $\{u_n\}_{n \geq 0}$ converging quadratically to $\sqrt{x}$. The sequence is initialized by $u_0=r$, where $r$ is commonly set to either 1 or to the nearest integer from $\sqrt{x}$, the latter option being opted in the following examples. The remainder of the sequence is defined iteratively as:

\begin{equation}
(\forall n \geq 0) \;\;\; u_{n+1} = \frac{u_n}{2}+\frac{x}{2u_n}
\label{FirstNewt}
\end{equation}

For example, for $x=51$, $r$ can be set to to 7 and the sequence starts by $7,\frac{50}{7}$, $\frac{4999}{7000}$,$\frac{49980001}{6998600}$. In the following, the values $b=\frac{x-r^2}{2}$ and $M=\frac{x+r^2}{2}$ are introduced. In the previous example, $b=1$ and $M=50$. $b$ and $M$ correspond respectively to half the difference and the middle value between $x$ and $r^2$. The motivation of this work was to provide an explicit expression of $u_n$, ideally of the form $u_n=u_{n-1}+\varepsilon_n$, where $\varepsilon_n$ depends on $x$ and $r$, or additionally on $M$ and $b$.

The second term of the sequence $u_1$ can be expressed as $u_1=r+\frac{b}{r}$ or equivalently to $u_1=\frac{M}{r}$. The third term of the sequence $u_2$ can be expressed as followed:

\begin{equation}
u_2= r+\frac{b}{r} - \frac{b^2}{2rM}
\end{equation}

The expression of $u_2$ is equivalent to the Bakhshali method \cite{Gadtia20201,Shirali2012884}. This method can be  obtained either by iterating the Newton's method twice or by the second-order Taylor expansion of $\sqrt{x}=\frac{M}{r}\sqrt{1-\left(\frac{b}{M}\right)^2}$. Considering our numerical example, we obtain $u_2=7+\frac{1}{7}-\frac{1}{700}$. Note that the second-order Taylor expansion  of $\sqrt{x}=\sqrt{r^2+2b}$ is less effective than the Bakhshali method.

After a straightforward computation, the fourth term of the sequence $u_3$ can be expressed as follows:

\begin{equation}
u_3= r+\frac{b}{r} - \frac{b^2}{2rM} - \frac{b^4}{4rM(2M^2-b^2)}
\end{equation}

In the numerical example, we obtain $u_3=7+\frac{1}{7}-\frac{1}{700}-\frac{1}{6998600}$. As $b$ is commonly negligible compared to $M$ ($M \gg b$), the expression of $u_3$ can be approximated by $\widehat{u_3}$ as follows:

\begin{equation}
\widehat{u_3}= r+\frac{b}{r} - \frac{b^2}{2rM} - \frac{b^4}{8rM^3}
\end{equation}

This expression is a bit more tractable, and the numerical example is simpler here with $\widehat{u_3}=7+\frac{1}{7}-\frac{1}{700}-\frac{1}{7000000}$. The expression of $\widehat{u_3}$ can be obtained from the Taylor expansion of $\sqrt{x}=\frac{M}{r}\sqrt{1-\left(\frac{b}{M}\right)^2}$.  In particular, this Taylor expansion can be extended further as follows:

\begin{equation}
\widehat{u_n}= r+\frac{b}{r} - \frac{b^2}{2rM} - \frac{b^4}{8rM^3}- \frac{b^6}{16rM^5}-\frac{5}{128}\frac{b^8}{rM^7}-\ldots -\frac{\binom{2n}{n}}{4^n(2n-1)}\frac{b^{2n}}{rM^{2n-1}}
\end{equation}

However, the convergence of this Taylor expansion is much slower than the sequence from the Newton's method as it involves all even powers of $b$ while the Newton's method involves only power of two powers of $b$. The expression of $u_4$ and $u_5$ are presented in (\ref{labu4}) and (\ref{labu5}), respectively.

  \begin{equation}
u_4= r+\frac{b}{r} - \frac{b^2}{2rM} - \frac{b^4}{4rM(2M^2-b^2)}-\frac{b^8}{8rM(2M^2-b^2)(8M^4-8M^2b^2+b^4)}
\label{labu4}
\end{equation}
	
  \begin{equation}
u_5= u_4 -\frac{b^{16}}{16rM(2M^2-b^2)(8M^4-8M^2b^2+b^4)(128M^8-256M^6b^4+160M^4b^4-32M^2b^6+b^8)}
\label{labu5}
\end{equation}
	
The identification of the first Chebyshev polynomials of the first kind at the denominators of the sequence has initiated the present work. The current paper is organized as follows. Section 2 presents the expression of the Newton's method for square roots based on Chebyshev polynomials of the first and second kinds. The Halley's method for square roots is presented in Section 3 and involves the Chebyshev polynomials of the third and fourth kind. In Section 4, the Householder's method for square roots is proposed and various expressions of the algorithm are derived. An extension of the Householder's method for nth root is introduced in Section 5. Finally, a note on the asymptotic expansion of Chebyshev functions is presented in Section 6.

\section{Polynomial expressions in the Babylonian method}

The first polynomials at stake at the denominators of $u_n$ are respectively $T_1(X)=X$, $T_2(X)=2X^2-1$, $T_4(X)=8x^4-8x^2+1$ and $T_8(X)=128x^8-256x^6+160x^4-32x^2+1$ which are Chebyshev polynomials of the first kind \cite{Mason20021}.

The Chebyshev polynomials of the first kind are defined by $T_n(\textit{cos}t)=\textit{cos}(nt)$. They can be defined by the following recurrence relation:

$$T_0(X)=1\;\;,\;\; T_1(X)=X \;\;,\;\; T_{n+1}(X)=2XT_n(X)-T_{n-1}(X)$$

A useful relationship can be derived from the well-known  identity: $2T_n(X)T_m(X)=T_{m+n}(X)+T_{ |m-n|}(X)$ and is presented as follows:

  \begin{equation}
T_{2^{n+1}}(X)=2T^2_{2^n}(X)-1
\label{powertn}
\end{equation}

From the expression of $u_5$ in  (\ref{labu5}), we can conjecture an expression of $u_n$ based on the Chebyshev polynomials of index of power of two. Theorem \ref{th:conjTche} presents the explicit expression of the sequence $(u_n)$ and a proof is presented, after deriving two basic properties of the 
Chebyshev polynomials in Lemma  \ref{lm:conjTche} and  Lemma \ref{lm:conjTche2}.

\begin{theorem}\label{th:conjTche}
 Let $x$ be a real positive number  and $\{u_n\}_{n\geq 0}$ be the sequence associated to the Babylonian method reminded in (\ref{babth}), where $r$ is a real positive number.

\begin{equation}
u_0=r \;\;\;,\;\;(\forall n \geq 0)\;\;u_{n+1}= \frac{u_n}{2}+\frac{x}{2u_n}
\label{babth}
\end{equation}

An explicit expression of $u_n$ is presented as follows for $n$ greater or equal than 1, based on the Chebyshev polynomials of the first kind $\{T_k\}_{k \geq 0}$:

\begin{equation}
u_n=\frac{x+r^2}{2r}-\frac{x-r^2}{2r}\displaystyle \sum_{k=1}^{n-1} \displaystyle \frac{1}{2^k \displaystyle \prod_{j=0}^{k-1} T_{2^j}(\frac{x+r^2}{x-r^2})}
\label{exprexpl}
\end{equation}

Furthermore,  if we denote $b=\frac{x-r^2}{2}$ and $M=\frac{x+r^2}{2}$, the previous expression of $u_n$ can be derived as follows:

\begin{equation}
u_n=\frac{M}{r}-\frac{b}{r}\displaystyle \sum_{k=1}^{n-1} \displaystyle \frac{1}{2^k \displaystyle \prod_{j=0}^{k-1} T_{2^j}\left(\frac{M}{b}\right)}
\label{exprexpl2}
\end{equation}

Finally, an expression of $u_n$ over a common denominator is expressed for $n$ greater or equal than 1:

\begin{equation}
u_n=\frac{b}{r}\displaystyle  \frac{T_{2^{n-1}}\left(\frac{M}{b}\right)}{2^{n-1} \displaystyle \prod_{j=0}^{n-2} T_{2^j}\left(\frac{M}{b}\right)}
\label{denom}
\end{equation}

\end{theorem}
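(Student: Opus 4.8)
The plan is to establish the three displays in the logical order (\ref{exprexpl2}) $\Leftrightarrow$ (\ref{denom}) and then (\ref{denom}) $\Rightarrow$ the recurrence, getting (\ref{exprexpl}) for free. Since $M=\tfrac{x+r^2}{2}$ and $b=\tfrac{x-r^2}{2}$ give $\tfrac{M}{b}=\tfrac{x+r^2}{x-r^2}$, $\tfrac{M}{r}=\tfrac{x+r^2}{2r}$ and $\tfrac{b}{r}=\tfrac{x-r^2}{2r}$, the display (\ref{exprexpl}) is literally the substitution of these quantities into (\ref{exprexpl2}); so I would work throughout with the variable $y:=\tfrac{M}{b}$ and the abbreviation $P_k:=\prod_{j=0}^{k-1}T_{2^j}(y)$ (with $P_0=1$, so that $P_1=T_1(y)=y$), and only the two displays (\ref{exprexpl2}) and (\ref{denom}) need a genuine argument.

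First I would prove the equivalence of the sum form (\ref{exprexpl2}) and the closed form (\ref{denom}) by induction on $n$, i.e. that $y-\sum_{k=1}^{n-1}\tfrac{1}{2^kP_k}=\tfrac{T_{2^{n-1}}(y)}{2^{n-1}P_{n-1}}$. The base case $n=1$ compares an empty sum with $T_{2^0}(y)=T_1(y)=y$. For the inductive step I subtract the new term $\tfrac{1}{2^nP_n}$, use $P_n=P_{n-1}T_{2^{n-1}}(y)$ to put everything over $2^nP_{n-1}T_{2^{n-1}}(y)$, and collapse the numerator $2T_{2^{n-1}}(y)^2-1$ into $T_{2^n}(y)$ by the doubling identity (\ref{powertn}); the result is $\tfrac{T_{2^n}(y)}{2^nP_n}$, as required. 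This is a clean telescoping and should present no difficulty.

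The substantive step is to show that the closed form (\ref{denom}) satisfies the Babylonian recurrence (\ref{babth}). Writing $u_n=\tfrac{b}{r}c_n$ with $c_n:=\tfrac{T_{2^{n-1}}(y)}{2^{n-1}P_{n-1}}$, and using $x=M+b$, $r^2=M-b$, hence $xr^2=M^2-b^2=b^2(y^2-1)$, the recurrence $u_{n+1}=\tfrac{u_n}{2}+\tfrac{x}{2u_n}$ reduces, after factoring out $\tfrac{b}{r}$, to the single scalar identity $c_{n+1}=\tfrac{c_n^2+y^2-1}{2c_n}$. The key input here is the Pell-type relation $T_{2^{n-1}}(y)^2-1=(y^2-1)\,(2^{n-1}P_{n-1})^2$, which is precisely what Lemmas \ref{lm:conjTche} and \ref{lm:conjTche2} supply (the product formula for the Chebyshev polynomials of the second kind, $U_{2^{n-1}-1}(y)=2^{n-1}P_{n-1}$, together with $T_m(y)^2-1=(y^2-1)U_{m-1}(y)^2$). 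Granting it, $c_n^2+y^2-1=\tfrac{2T_{2^{n-1}}(y)^2-1}{(2^{n-1}P_{n-1})^2}$, which (\ref{powertn}) turns into $\tfrac{T_{2^n}(y)}{(2^{n-1}P_{n-1})^2}$; dividing by $2c_n$ and simplifying once more with $P_n=P_{n-1}T_{2^{n-1}}(y)$ gives exactly $c_{n+1}$.

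Finally I would record the base case $u_1$: with $n=1$ both (\ref{denom}) and (\ref{exprexpl2}) collapse to $\tfrac{b}{r}\cdot\tfrac{T_1(y)}{1}=\tfrac{b}{r}\,y=\tfrac{M}{r}$, matching $u_1=\tfrac{u_0}{2}+\tfrac{x}{2u_0}=\tfrac{r^2+x}{2r}$. The induction then propagates (\ref{denom}) for all $n\ge1$, the telescoping of the previous paragraph upgrades it to (\ref{exprexpl2}), and the substitution of $M$ and $b$ yields (\ref{exprexpl}). The one place where real content hides is the Pell-type identity; everything else is bookkeeping driven by (\ref{powertn}). I expect the main obstacle to be keeping the powers of $2$ and the nested products $P_{n-1}$, $P_n$ straight through the recurrence computation rather than any conceptual difficulty, the role of the two lemmas being exactly to furnish the $y^2-1$ factor that converts the Newton step into the Chebyshev doubling relation.
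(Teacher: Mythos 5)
Your proposal is correct, and it is close in spirit to the paper's proof but streamlines it in one genuine respect. The paper introduces two lemmas: Lemma \ref{lm:conjTche}, an expansion of $T_{2^N}$ used to collapse the partial sums of (\ref{exprexpl2}) onto the common denominator of (\ref{denom}) in a single algebraic step, and Lemma \ref{lm:conjTche2}, the Pell-type identity $T_{2^{n-1}}(y)^2-1=(y^2-1)\bigl(2^{n-1}P_{n-1}\bigr)^2$. You dispense with Lemma \ref{lm:conjTche} entirely: your telescoping induction, which subtracts the new term $\tfrac{1}{2^nP_n}$, clears denominators via $P_n=P_{n-1}T_{2^{n-1}}(y)$, and invokes only the doubling identity (\ref{powertn}), proves the equivalence of (\ref{exprexpl2}) and (\ref{denom}) more economically than the paper's route. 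The remaining engine is identical in both arguments: the Newton step reduces to $c_{n+1}=\tfrac{c_n^2+y^2-1}{2c_n}$, and the $y^2-1$ term is absorbed by exactly the content of Lemma \ref{lm:conjTche2}, after which (\ref{powertn}) converts $2T_{2^{n-1}}(y)^2-1$ into $T_{2^n}(y)$. The only structural difference is that the paper runs its induction on the sum form $v_n$ (showing the Newton correction equals the next summand), whereas you run it on the closed form $c_n$; this is cosmetic. One small attribution slip: the Pell relation you need is supplied by Lemma \ref{lm:conjTche2} alone (or, as you note parenthetically, by the standard identities $U_{2^{n-1}-1}=2^{n-1}P_{n-1}$ and $T_m^2-1=(y^2-1)U_{m-1}^2$, which the paper only deploys later in Corollary \ref{th:conjTcheCoro}); Lemma \ref{lm:conjTche} plays no role in it.
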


\begin{lemma}\label{lm:conjTche}
The Chebyshev polynomials verify this equality for all $N$ greater or equal than 1:
\begin{equation}
T_{2^N}(X)=2^N X \prod_{j=0}^{N-1}T_{2^j}(X)-\sum_{j=1}^N 2^{N-j}\prod_{k=j}^{N-1}T_{2^k}(X)
\label{lemma1eq}
\end{equation}
\end{lemma}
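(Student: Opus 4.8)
The plan is to prove Lemma \ref{lm:conjTche} by induction on $N$. The claim is an identity relating $T_{2^N}(X)$ to the products $\prod_{j=0}^{N-1}T_{2^j}(X)$ of the lower dyadic-index Chebyshev polynomials, and the natural tool is the doubling relation \eqref{powertn}, namely $T_{2^{N}}(X)=2T^2_{2^{N-1}}(X)-1$, which relates consecutive levels of the recursion. I would first verify the base case $N=1$: the right-hand side reads $2X\,T_{2^0}(X)-2^{0}\,T_{2^0}(X)$, and since $T_{2^0}(X)=T_1(X)=X$, this gives $2X^2-1=T_2(X)=T_{2^1}(X)$, as required.

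\medskip

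For the inductive step, assume the identity holds for some $N\geq 1$, and aim to establish it for $N+1$. The key computation starts from the doubling relation $T_{2^{N+1}}(X)=2T^2_{2^N}(X)-1$ and substitutes the inductive expression for $T_{2^N}(X)$ into the squared term. Writing $P_N:=\prod_{j=0}^{N-1}T_{2^j}(X)$ and $S_N:=\sum_{j=1}^N 2^{N-j}\prod_{k=j}^{N-1}T_{2^k}(X)$, the hypothesis is $T_{2^N}(X)=2^N X P_N-S_N$, so
\begin{equation}
T_{2^{N+1}}(X)=2\bigl(2^N X P_N-S_N\bigr)T_{2^N}(X)-1.
\end{equation}
The first piece $2\cdot 2^N X P_N T_{2^N}(X)=2^{N+1}X\prod_{j=0}^{N}T_{2^j}(X)=2^{N+1}X P_{N+1}$ already matches the leading term of the target expression for index $N+1$. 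It then remains to check that the remaining contribution, $-2 S_N T_{2^N}(X)-1$, equals $-S_{N+1}=-\sum_{j=1}^{N+1}2^{N+1-j}\prod_{k=j}^{N}T_{2^k}(X)$.

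\medskip

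The heart of the argument is therefore the recursive structure of the sum $S_N$. I expect the cleanest route is to observe the recursion $S_{N+1}=2 S_N\,T_{2^N}(X)+1$, which can be read off directly from the definition: every term in $S_N$ picks up an extra factor $T_{2^N}(X)$ and a doubling of its power of two when the product range is extended to include $k=N$, accounting for the indices $j=1,\dots,N$ in $S_{N+1}$, while the new index $j=N+1$ contributes the isolated term $2^{0}\prod_{k=N+1}^{N}(\cdot)=1$ (an empty product). Granting this recursion, the remaining contribution $-2S_N T_{2^N}(X)-1$ is exactly $-S_{N+1}$, closing the induction.

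\medskip

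The main obstacle I anticipate is purely bookkeeping: correctly tracking how the exponents $2^{N-j}$ and the truncated products $\prod_{k=j}^{N-1}$ transform when the level increases, and in particular getting the empty-product convention right so that the newly created $j=N+1$ term produces precisely the $-1$ that cancels against the $-1$ coming from the doubling relation. Once the recursion $S_{N+1}=2S_N T_{2^N}(X)+1$ is verified carefully from the summation definition, the algebra assembles immediately and no genuine difficulty remains.
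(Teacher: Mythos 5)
Your proof is correct and follows essentially the same route as the paper, which also proceeds by induction on $N$, combining the induction hypothesis with the doubling relation $T_{2^{N+1}}(X)=2T_{2^N}^2(X)-1$; your write-up simply makes explicit the recursion $S_{N+1}=2S_N\,T_{2^N}(X)+1$ that the paper leaves implicit. One small slip to fix: in the base case the subtracted term is the empty product $2^{0}\prod_{k=1}^{0}T_{2^k}(X)=1$, not $2^{0}\,T_{2^0}(X)$, though your stated conclusion $2X^2-1=T_2(X)$ is the correct one.
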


\begin{lemma}\label{lm:conjTche2}
 The Chebyshev polynomials verify this equality for all $N$ greater or equal than 0:
\begin{equation}
T_{2^N}^2(X)-4^N( X^2-1) \prod_{i=0}^{N-1}T_{2^i}^2(X)=1
\label{lemma2eq}
\end{equation}
\end{lemma}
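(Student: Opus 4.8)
The plan is to prove \eqref{lemma2eq} by induction on $N$, the single essential ingredient being the square-doubling relation \eqref{powertn}, namely $T_{2^{N+1}}(X)=2T_{2^N}^2(X)-1$, which is already established. For the base case $N=0$ the product $\prod_{i=0}^{-1}$ is empty and equals $1$, while $4^0=1$ and $T_{2^0}(X)=T_1(X)=X$; the left-hand side is then $X^2-(X^2-1)=1$, as required.

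For the inductive step I would assume the identity at level $N$, rewritten in the convenient form
\[
4^N(X^2-1)\prod_{i=0}^{N-1}T_{2^i}^2(X)=T_{2^N}^2(X)-1 .
\]
First I would square \eqref{powertn} to obtain
\[
T_{2^{N+1}}^2(X)=\bigl(2T_{2^N}^2(X)-1\bigr)^2=4T_{2^N}^4(X)-4T_{2^N}^2(X)+1 .
\]
Next I would peel one factor of $4$ and the last factor $T_{2^N}^2(X)$ off the product appearing at level $N+1$, writing
\[
4^{N+1}(X^2-1)\prod_{i=0}^{N}T_{2^i}^2(X)=4\,T_{2^N}^2(X)\,\Bigl[4^N(X^2-1)\prod_{i=0}^{N-1}T_{2^i}^2(X)\Bigr].
\]
Substituting the induction hypothesis into the bracket turns the right-hand side into $4T_{2^N}^2(X)\bigl(T_{2^N}^2(X)-1\bigr)=4T_{2^N}^4(X)-4T_{2^N}^2(X)$. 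Subtracting this from the squared expression above, the quartic and quadratic terms cancel and only the constant $1$ remains, which is precisely the claim at level $N+1$.

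The argument is therefore routine once the bookkeeping is set up so that the induction hypothesis reappears as an explicit factor; the only point requiring attention is matching the powers of $4$ and the index ranges of the products when passing from $N$ to $N+1$, so I would be careful about that single alignment. As an independent sanity check, I note that the identity is exactly the Chebyshev Pell relation $T_n^2(X)-(X^2-1)U_{n-1}^2(X)=1$ specialized to $n=2^N$, combined with the product formula $U_{2^N-1}(X)=2^N\prod_{i=0}^{N-1}T_{2^i}(X)$ (equivalently $\sin(2^N t)=2^N\sin t\prod_{i=0}^{N-1}\cos(2^i t)$); this reproduces \eqref{lemma2eq} and could serve as an alternative derivation, though the direct induction above is more self-contained given that \eqref{powertn} is already in hand.
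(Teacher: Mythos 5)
Your induction is correct and is exactly the argument the paper intends: the paper's proof of Lemma \ref{lm:conjTche2} is only a one-line sketch (``base case immediate, induction step combines the hypothesis with (\ref{powertn})''), and your write-up supplies precisely those details, with the base case via the empty product and the cancellation $4T_{2^N}^4-4T_{2^N}^2+1-\bigl(4T_{2^N}^4-4T_{2^N}^2\bigr)=1$ checking out. Your closing remark identifying the identity as the Pell relation $T_n^2-(X^2-1)U_{n-1}^2=1$ at $n=2^N$ is a nice independent confirmation, consistent with the factorization $U_{2^N-1}=2^N\prod_{i=0}^{N-1}T_{2^i}$ used later in the paper's Corollary \ref{th:conjTcheCoro}.
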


\begin{proof} \label{pr:conjTche}
  The proofs of Lemma \ref{lm:conjTche} and Lemma \ref{lm:conjTche2} are trivial. Both base cases are immediate. For each lemma, the induction step requires to combine the induction hypothesis with the equality highlighted in (\ref{powertn}).
	
	Next, we will prove that the expressions in (\ref{exprexpl2}) and (\ref{denom}) are identical. \\ Let $v_n$ be $\frac{M}{r}-\frac{b}{r}\displaystyle \sum_{k=1}^{n-1} \displaystyle \frac{1}{2^k \displaystyle \prod_{j=0}^{k-1} T_{2^j}\left(\frac{M}{b}\right)}$. $v_n$ can be rewritten as follows:

$$v_n=\frac{2^{n-1}M\displaystyle \prod_{j=0}^{n-2} T_{2^j}\left(\frac{M}{b}\right)-b\sum_{j=1}^{n-1}2^{n-1-j}\prod_{k=j}^{n-2} T_{2^k}\left(\frac{M}{b}\right)}{2^{n-1} r \displaystyle \prod_{j=0}^{n-2} T_{2^j}\left(\frac{M}{b}\right)}$$
$$v_n=\frac{b \left(2^{n-1}\frac{M}{b}\displaystyle \prod_{j=0}^{n-2} T_{2^j}\left(\frac{M}{b}\right)-\sum_{j=1}^{n-1}2^{n-1-j}\prod_{k=j}^{n-2} T_{2^k}\left(\frac{M}{b}\right)\right)}{2^{n-1} r \displaystyle \prod_{j=0}^{n-2} T_{2^j}\left(\frac{M}{b}\right)}$$	
	\begin{equation}
v_n=\frac{b}{r}\displaystyle  \frac{T_{2^{n-1}}\left(\frac{M}{b}\right)}{2^{n-1} \displaystyle \prod_{j=0}^{n-2} T_{2^j}\left(\frac{M}{b}\right)}
\label{denompr}
\end{equation}
	
The last equality in (\ref{denompr}) has been obtained using Lemma \ref{lm:conjTche} with $X=\frac{M}{b}$ and $N=n-1$. The next step consists of evaluating the quantity $\frac{v_n^2-M-b}{2v_n}$ using the last version of $v_n$ from (\ref{denompr}):

$$\frac{ \displaystyle v_n^2-M-b}{2v_n}=\frac{ \displaystyle b^2T^2_{2^{n-1}}\left(\frac{M}{b}\right)-r^2(M+b)4^{n-1}\prod_{j=0}^{n-2} T^2_{2^j}\left(\frac{M}{b}\right)}{ \displaystyle 2 r b 2^{n-1}T_{2^{n-1}}\left(\frac{M}{b}\right)\prod_{j=0}^{n-2} T_{2^j}\left(\frac{M}{b}\right)}$$
$$=\frac{\displaystyle b^2T^2_{2^{n-1}}\left(\frac{M}{b}\right)-4^{n-1}(M^2-b^2)\prod_{j=0}^{n-2} T^2_{2^j}\left(\frac{M}{b}\right)}{\displaystyle  r b 2^{n}\prod_{j=0}^{n-1} T_{2^j}\left(\frac{M}{b}\right)}$$
$$=\frac{b\left(T^2_{2^{n-1}}\left(\frac{M}{b}\right)-4^{n-1}(\left(\frac{M}{b}\right)^2-1)\prod_{j=0}^{n-2} T^2_{2^j}\left(\frac{M}{b}\right)\right)}{\displaystyle    2^{n} r \prod_{j=0}^{n-1} T_{2^j}\left(\frac{M}{b}\right)}$$
\begin{equation}
\frac{v_n^2-M-b}{2v_n}=\frac{b}{\displaystyle    2^{n} r \prod_{j=0}^{n-1} T_{2^j}\left(\frac{M}{b}\right)}
\label{prmil}
\end{equation}
	
The last equality in (\ref{prmil}) has been obtained using Lemma \ref{lm:conjTche2} with $X=\frac{M}{b}$ and $N=n-1$.	Finally, we prove by induction that $u_n=v_n$ for $n$ greater or equal than 1. The base case is immediate as both quantities are equal to $\frac{M}{r}$. The induction hypothesis consists of assuming that $u_n=v_n$ for  a positive integer $n$. The induction step can be derived as follows:

$$u_{n+1}=\frac{u_n}{2}+\frac{x}{2u_n}=u_n-\frac{u_n^2-M-b}{2u_n}=v_n-\frac{v_n^2-M-b}{2v_n}$$
\begin{equation}
=\frac{M}{r}-\frac{b}{r}\displaystyle \sum_{k=1}^{n-1} \displaystyle \frac{1}{2^k \displaystyle \prod_{j=0}^{k-1} T_{2^j}\left(\frac{M}{b}\right)}-\frac{b}{\displaystyle    2^{n} r \prod_{j=0}^{n-1} T_{2^j}\left(\frac{M}{b}\right)}=\frac{M}{r}-\frac{b}{r}\displaystyle \sum_{k=1}^{n} \displaystyle \frac{1}{2^k \displaystyle \prod_{j=0}^{k-1} T_{2^j}\left(\frac{M}{b}\right)}=v_{n+1}
\label{prfin}
\end{equation}
This ends the proof of Theorem \ref{th:conjTche}.

\end{proof}

An algorithm to compute $u_n$ can be derived for $n$ greater or equal than 1. It is implemented in Algorithm 1 with input $x$, an initialization $r$, a positive index $n$ and with output $S$. A similar algorithm involving the computation of square root,   Chebyshev polynomials and the ratio $\frac{x+r^2}{x-r^2}$ has been obtained through an Engel expansion \cite{PerronEngelp127}:
$$\sqrt{\frac{x+r^2}{x-r^2}}=\displaystyle \prod_{k=0}^\infty \left(1+\frac{1}{X_k}\right) \;\;\;,\; X_0=\frac{x}{r^2} \;\;\;,\; X_{k+1}=T_2(X_k)$$


\begin{algorithm}[!ht]
\label{algoBab}
\caption{$\;\;\;S=u(x,r,n)$}
\begin{algorithmic}[1]
\item $S \gets \frac{x+r^2}{2r}$
\item $X \gets  \frac{x+r^2}{x-r^2}$
\item $e \gets \frac{r^2-x}{4rX}$
\For{i=2:n}
\item $S \gets S+e$
\item $X \gets 2X^2-1$
\item $e \gets \frac{e}{2X}$      
\EndFor
\State ${\bf return} \displaystyle \;\; S$
\end{algorithmic}
\end{algorithm}

The previous expressions are only expressed using the Chebyshev polynomials of the first kind  but can also be expressed using the Chebyshev polynomials of the second kind \cite{Mason20021} $U_n$, which is a family of polynomials defined as follows:

\begin{equation}
U_0(X)=1\;\;,\;\; U_1(X)=2X \;\;,\;\; U_{n+1}(X)=2XU_n(X)-U_{n-1}(X)
\label{defUn}
\end{equation}

A consequence of Theorem \ref{th:conjTche} is presented in Corollary \ref{th:conjTcheCoro}, followed by a proof.

\begin{corollary}\label{th:conjTcheCoro} The sequence $\{u_n\}_{n\geq 0}$ can be expressed as follows:

\begin{equation}
u_0=r \;\;,\;\; (\forall \, n \in \mathbb{N}^* \,) \;\; u_n =r \left. \frac{T_{2^{n-1}}\left(X\right)}{(X-1)U_{2^{n-1}-1}\left(X\right)}\right|_{X=\left(\frac{x+r^2}{x-r^2}\right)}
\label{newtU}
\end{equation}

Alternately,  the sequence $\{u_n\}_{n\geq 0}$ can be expressed as a product of monomials:

\begin{equation} 
u_n= \displaystyle r  \left. \displaystyle \prod_{k=0}^{2^{n-1}-1} \frac{\displaystyle X-cos\left( \frac{(2 k+1) \pi}{2^n} \right)}{\displaystyle X-cos\left( \frac{2 k \pi}{2^n} \right)} \right|_{X=\left(\frac{x+r^2}{x-r^2}\right)}
\label{NewtMON}
\end{equation}

\end{corollary}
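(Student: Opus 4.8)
The plan is to start from the closed form (\ref{denom}) already established in Theorem \ref{th:conjTche}, namely
$$u_n = \frac{b}{r}\,\frac{T_{2^{n-1}}(X)}{2^{n-1}\prod_{j=0}^{n-2}T_{2^j}(X)},\qquad X=\frac{M}{b}=\frac{x+r^2}{x-r^2},$$
and to recognize the $T$-product in the denominator as a single Chebyshev polynomial of the second kind. The crucial ingredient is the product formula
$$U_{2^{n-1}-1}(X)=2^{n-1}\prod_{j=0}^{n-2}T_{2^j}(X),$$
which converts (\ref{denom}) directly into $u_n=\frac{b}{r}\,\frac{T_{2^{n-1}}(X)}{U_{2^{n-1}-1}(X)}$.

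To establish this product formula I would use the trigonometric representations $T_m(\cos\theta)=\cos(m\theta)$ and $U_{m-1}(\cos\theta)=\sin(m\theta)/\sin\theta$. Applying the doubling identity $\sin(2\phi)=2\sin\phi\cos\phi$ repeatedly telescopes $\sin(2^{n-1}\theta)=2^{n-1}\sin\theta\prod_{j=0}^{n-2}\cos(2^j\theta)$; dividing by $\sin\theta$ yields the identity for $X=\cos\theta$ on a whole interval, hence as a polynomial identity. Equivalently, one may iterate the standard relation $U_{2m-1}=2T_m U_{m-1}$ together with $U_0=1$ (and, if desired, (\ref{powertn})), which telescopes to the same result.

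For the remaining factor $\frac{b}{r}$ I would invoke the elementary algebraic relation $b(X-1)=r^2$, which follows at once from $b=\frac{x-r^2}{2}$ and $X-1=\frac{2r^2}{x-r^2}$. This gives $\frac{b}{r}=\frac{r}{X-1}$, and substituting it into the expression above produces exactly (\ref{newtU}). To then obtain the monomial form (\ref{NewtMON}), I would factor numerator and denominator over their roots: the zeros of $T_{2^{n-1}}$ are $\cos\frac{(2k+1)\pi}{2^n}$ for $k=0,\dots,2^{n-1}-1$ with leading coefficient $2^{2^{n-1}-1}$, while the zeros of $U_{2^{n-1}-1}$ are $\cos\frac{2k\pi}{2^n}$ for $k=1,\dots,2^{n-1}-1$, the extra factor $(X-1)$ supplying the missing root $\cos 0$ corresponding to $k=0$. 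Thus $(X-1)U_{2^{n-1}-1}(X)$ has precisely the roots $\cos\frac{2k\pi}{2^n}$, $k=0,\dots,2^{n-1}-1$, again with leading coefficient $2^{2^{n-1}-1}$; the equal leading coefficients cancel in the ratio, leaving the stated product of monomials after multiplication by $r$.

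The computations are all routine; the one step deserving care is the product formula for $U_{2^{n-1}-1}$, since the entire reduction hinges on identifying the $T$-product in (\ref{denom}) with a single second-kind polynomial and on matching indices of powers of two correctly. The only other delicate point is verifying that the leading coefficients of the numerator and denominator in (\ref{NewtMON}) genuinely coincide, so that they cancel cleanly; this is where a stray factor or an off-by-one in the exponent $2^{n-1}-1$ could most easily slip in.
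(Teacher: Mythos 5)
Your proposal is correct and follows essentially the same route as the paper: reduce (\ref{denom}) via the telescoped identity $U_{2^{n-1}-1}=2^{n-1}\prod_{j=0}^{n-2}T_{2^j}$ (the paper iterates the same classical relation $U_{2m-1}=2T_mU_{m-1}$), convert $\tfrac{b}{r}$ to $\tfrac{r}{X-1}$ using $r^2=M-b$, and then read off the monomial form from the known root sets of $T_{2^{n-1}}$ and $U_{2^{n-1}-1}$. Your explicit check that the leading coefficients $2^{2^{n-1}-1}$ of numerator and denominator coincide is a point the paper leaves implicit, but the argument is otherwise identical.
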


\begin{proof} \label{pr2}

 A classical identity of the Chebyshev polynomials \cite{Rayes20051231} is that $U_{2n-1}=2T_n U_{n-1}$. It implies that $U_{2^{n}-1}= 2^n \prod_{j=0}^{n-1} T_{2^{j}}$. Therefore, Equation (\ref{denom}) can be re-written as follows:

\begin{equation}
u_n=\frac{b}{r}\displaystyle  \frac{T_{2^{n-1}}\left(\frac{M}{b}\right)}{U_{2^{n-1}-1}\left(\frac{M}{b}\right)}
\label{denom22}
\end{equation}

As $r^2=M-b$, the previous equality can be expressed as follows:

\begin{equation}
u_n=r\displaystyle  \frac{T_{2^{n-1}}\left(\frac{M}{b}\right)}{\left(\frac{M}{b}-1\right)U_{2^{n-1}-1}\left(\frac{M}{b}\right)}
\label{denom33}
\end{equation}

This proves the expression (\ref{newtU}) by replacing $M$ and $b$ in function of $x$ and $r$. The roots of Chebyshev polynomials $T_j$ and $U_j$ are $\left\{cos(\frac{(2k+1)\pi}{2j})\right\}_{k=0..j-1}$ and $\left\{cos(\frac{k\pi}{j+1})\right\}_{k=1..j}$, respectively \cite{Mason20021}. Therefore the roots of the rational function $\displaystyle \frac{T_{2^{n-1}}\left(X\right)}{(X-1)U_{2^{n-1}-1}\left(X\right)}$ are $\left\{cos\left( \frac{(2 k+1) \pi}{2^n} \right)\right\}_{k=0..2^{n-1}-1}$ and their poles are 1 and $\left\{cos\left( \frac{(2 k) \pi}{2^n} \right)\right\}_{k=1..2^{n-1}-1}$.Writing 1 as 
$cos\left( \frac{(2\times 0) \pi}{2^n} \right)$ and evaluating the rational function at ${X=\left(\frac{x+r^2}{x-r^2}\right)}$ allows to obtain (\ref{NewtMON}).

\end{proof}

\section{Polynomial expressions in the Halley's method for square roots}

 While the Newton's method provides a quadratic rate of convergence, the Halley's method \cite{Hasan20066379} has a cubic rate of convergence to the root. Its corresponding algorithm for computing the square root of $x$ with an initial value $r$ is characterized through a sequence $\{{h_n}\}_{n \geq 0}$ defined as follows:

\begin{equation}
{h_0}=r \;\;\;,\;\; (\forall n \geq 0) \;\;\; {h_{n+1}}= {h_{n}} \frac{{h_{n}}^2+3x}{3{h_{n}}^2+x}
\label{defHall}
\end{equation}

As previously, the sequence will be initially written in function of $b=\frac{x-r^2}{2}$ and $M=\frac{x+r^2}{2}$. The first terms of the sequences can be expressed as follows:

$$h_1=r\frac{2M+b}{2M-b}$$
$$h_2=r\frac{(2M+b)}{(2M-b)}\frac{(8M^3-6Mb^2+b^3)}{(8M^3-6Mb^2-b^3)}$$
\begin{equation}
{h_3}=r\frac{(2M+b)}{(2M-b)}\frac{(8M^3-6Mb^2+b^3)}{(8M^3-6Mb^2-b^3)}\frac{(512M^9-1152M^7b^2+864M^5b^4-240M^3b^6+18Mb^8+b^9)}{(512M^9-1152M^7b^2+864M^5b^4-240M^3b^6+18Mb^8-b^9)}
\label{firsthi}
\end{equation}

As for the Babylonian method, the Chebyshev polynomials present themselves remarkably as numerators and denominators can be expressed in function of $2T_{3^k}(X) \pm 1$. An explicit expression of the sequence is proposed in Theorem \ref{th:Hall}. After deriving a basic property of the 
Chebyshev polynomials in Lemma  \ref{lm:Hall}, a proof of Theorem \ref{th:Hall} is presented.

\begin{theorem}\label{th:Hall}

Considering the Halley's method associated to the function $f(t)=t^2-x$ and a starting point $r$, the corresponding sequence $\{{h_n}\}_{n \geq 0}$  can be expressed using the Chebyshev polynomials $T_k$ as follows:
\begin{equation}
\displaystyle (\forall n \geq 0), \;\;\;  h_n= r\prod_{i=1}^n \frac{2T_{3^{i-1}}\left(\displaystyle \frac{x+r^2}{x-r^2}\right)+1}{2T_{3^{i-1}}\left(\displaystyle \frac{x+r^2}{x-r^2}\right)-1}
\label{hallEXPL}
\end{equation}

\end{theorem}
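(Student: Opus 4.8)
The plan is to prove \eqref{hallEXPL} by first exhibiting a clean closed form for $h_n$ and then telescoping the Chebyshev product onto it. The starting observation is that the Halley iteration \eqref{defHall} for $f(t)=t^2-x$ is conjugate to cubing under the Möbius change of variable $y=\frac{t-\sqrt{x}}{t+\sqrt{x}}$. Indeed, a direct computation gives $h_{n+1}\pm\sqrt{x}=(h_n\pm\sqrt{x})^{3}/(3h_n^{2}+x)$, so that
$$\frac{h_{n+1}-\sqrt{x}}{h_{n+1}+\sqrt{x}}=\left(\frac{h_n-\sqrt{x}}{h_n+\sqrt{x}}\right)^{3},\qquad\text{whence}\qquad \frac{h_n-\sqrt{x}}{h_n+\sqrt{x}}=\left(\frac{r-\sqrt{x}}{r+\sqrt{x}}\right)^{3^{n}}.$$
Solving this relation for $h_n$ already yields an explicit, if unwieldy, formula, and the remaining work is to match it with the right-hand side of \eqref{hallEXPL}.

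Second, I would introduce the hyperbolic parametrisation $X=\cosh\theta$, where $X=\frac{x+r^2}{x-r^2}$, so that $T_{3^{i-1}}(X)=\cosh(3^{i-1}\theta)$. A short calculation using $\frac{X-1}{X+1}=\frac{r^2}{x}$ shows that $\frac{r-\sqrt{x}}{r+\sqrt{x}}=-e^{-\theta}$ and $\tanh(\theta/2)=\frac{r}{\sqrt{x}}$. Feeding $y_0=-e^{-\theta}$ into the formula from the previous step (and using that $3^{n}$ is odd) collapses it to the closed form $h_n=\sqrt{x}\,\tanh\!\left(3^{n}\theta/2\right)$. In fact I would prove this identity directly by induction on $n$: the base case is $\tanh(\theta/2)=r/\sqrt{x}$, and the step follows from the hyperbolic triple-angle identity $\tanh(3u)=\tanh u\,\frac{3+\tanh^{2}u}{1+3\tanh^{2}u}$ together with \eqref{defHall}, since writing $h_n=\sqrt{x}\,t$ with $t=\tanh(3^n\theta/2)$ gives $h_n\frac{h_n^2+3x}{3h_n^2+x}=\sqrt{x}\,t\,\frac{t^2+3}{3t^2+1}=\sqrt{x}\tanh(3^{n+1}\theta/2)$.

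Third, the product in \eqref{hallEXPL} is made to telescope by the Chebyshev identity I expect to furnish the basic lemma (presumably Lemma~\ref{lm:Hall}): from $\sinh 3u=\sinh u\,(2\cosh 2u+1)$ and $\cosh 3u=\cosh u\,(2\cosh 2u-1)$ one obtains, with $\phi=3^{i-1}\theta$,
$$\frac{2T_{3^{i-1}}(X)+1}{2T_{3^{i-1}}(X)-1}=\frac{2\cosh\phi+1}{2\cosh\phi-1}=\frac{\tanh(3\phi/2)}{\tanh(\phi/2)}=\frac{\tanh(3^{i}\theta/2)}{\tanh(3^{i-1}\theta/2)}.$$
Taking the product over $i=1,\dots,n$ telescopes to $\tanh(3^{n}\theta/2)/\tanh(\theta/2)$, and multiplying by $r=\sqrt{x}\,\tanh(\theta/2)$ returns exactly $\sqrt{x}\,\tanh(3^{n}\theta/2)=h_n$, which is \eqref{hallEXPL}.

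The main obstacle is not any single computation but keeping the parametrisation legitimate across both regimes $x>r^2$ (so $X>1$) and $x<r^2$ (so $X<-1$): the substitution $X=\cosh\theta$ with real $\theta$ is immediate only in the first case. The cleanest fix is to observe that \eqref{hallEXPL} is an identity between rational functions of $x$ and $r$, so it suffices to verify it on the open set $0<r^2<x$ and invoke analytic continuation; alternatively one checks the two triple-angle relations as polynomial identities in $X$ valid for all $X$. A minor point to watch is the sign bookkeeping in $y_0=-e^{-\theta}$, which is harmless precisely because every exponent $3^{n}$ is odd.
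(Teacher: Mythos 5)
Your proof is correct, but it follows a genuinely different route from the paper's. The paper proves Theorem~\ref{th:Hall} by a direct induction on $n$ applied to the recurrence \eqref{defHall}, powered by Lemma~\ref{lm:Hall}: two polynomial identities in $X$ involving $\prod_{i=1}^n(2T_{3^{i-1}}(X)\pm 1)^2$, themselves proved by a joint induction (``pivoting'' on the pair of equalities and using the triple-angle relation $T_{3^{n+1}}=4T_{3^n}^3-3T_{3^n}$). You instead linearize the iteration first: the conjugacy $\frac{h_{n+1}-\sqrt{x}}{h_{n+1}+\sqrt{x}}=\bigl(\frac{h_n-\sqrt{x}}{h_n+\sqrt{x}}\bigr)^{3}$ gives the closed form $h_n=\sqrt{x}\,\tanh(3^{n}\theta/2)$ with $X=\cosh\theta$, and the product in \eqref{hallEXPL} telescopes via $\frac{2\cosh\phi+1}{2\cosh\phi-1}=\frac{\tanh(3\phi/2)}{\tanh(\phi/2)}$ (all of your intermediate identities check out, including the sign $\frac{r-\sqrt{x}}{r+\sqrt{x}}=-e^{-\theta}$ being absorbed by the oddness of $3^n$). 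Your closed form is exactly the $d=2$ case of Theorem~\ref{th:sqrtxR} later in the paper, so in effect you are proving that theorem first and deducing the Chebyshev product from it; this is more conceptual, explains \emph{why} the polynomials $2T_{3^{i-1}}\pm 1$ appear, and generalizes immediately to any Householder order. What the paper's argument buys in exchange is that it stays entirely at the level of polynomial identities in $X$, so it never has to worry about the branch of $\cosh^{-1}$ or the sign of $x-r^2$; your fix for this (verify on the open set $0<r^2<x$, where both sides are rational functions of $(x,r)$, and conclude by density/continuation) is legitimate and should be stated as part of the proof rather than as an afterthought.
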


\begin{lemma}\label{lm:Hall}
The Chebyshev polynomials verify these equalities for all $n$ greater or equal than 0:
\begin{equation}(X-1)\prod_{i=1}^n(2T_{3^{i-1}}(X)+1)^2+3(X+1)\prod_{i=1}^n(2T_{3^{i-1}}(X)-1)^2=2(2T_{3^{n}}(X)+1)\label{eqlemma0}
\end{equation}
\begin{equation}  3(X-1)\prod_{i=1}^n(2T_{3^{i-1}}(X)+1)^2+(X+1)\prod_{i=1}^n(2T_{3^{i-1}}(X)-1)^2=2(2T_{3^{n}}(X)-1)
\label{eqlemma}
\end{equation}
\end{lemma}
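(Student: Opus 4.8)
The plan is to prove the coupled identities (\ref{eqlemma0}) and (\ref{eqlemma}) simultaneously by induction on $n$, but only after replacing them by an equivalent \emph{decoupled} pair that propagates much more cleanly. Abbreviating
\[
P_+(n)=\prod_{i=1}^n\bigl(2T_{3^{i-1}}(X)+1\bigr)^2,\qquad P_-(n)=\prod_{i=1}^n\bigl(2T_{3^{i-1}}(X)-1\bigr)^2,
\]
the two displayed equations read $(X-1)P_+(n)+3(X+1)P_-(n)=2\bigl(2T_{3^n}(X)+1\bigr)$ and $3(X-1)P_+(n)+(X+1)P_-(n)=2\bigl(2T_{3^n}(X)-1\bigr)$. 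Forming their sum and their difference is an invertible linear operation on the two right-hand sides, so the system is equivalent to the two separate identities
\[
(X-1)P_+(n)=T_{3^n}(X)-1,\qquad (X+1)P_-(n)=T_{3^n}(X)+1.
\]
I would prove these two decoupled identities and then recover (\ref{eqlemma0}) and (\ref{eqlemma}) by recombining them: the former is the sum of the first identity with three times the second, and the latter is three times the first plus the second.

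For the base case $n=0$ both products are empty, hence equal to $1$, and $T_{3^0}=T_1(X)=X$; the two decoupled identities then read $X-1=X-1$ and $X+1=X+1$, which hold trivially. For the induction step I would isolate the top factor of each product, namely $P_\pm(n+1)=\bigl(2T_{3^n}(X)\pm1\bigr)^2\,P_\pm(n)$, substitute the induction hypotheses $(X-1)P_+(n)=T_{3^n}-1$ and $(X+1)P_-(n)=T_{3^n}+1$, and then reduce everything to two cubic factorizations. Writing $t=T_{3^n}(X)$, these are
\[
(t-1)(2t+1)^2=4t^3-3t-1,\qquad (t+1)(2t-1)^2=4t^3-3t+1,
\]
that is, exactly $T_3(t)-1$ and $T_3(t)+1$, since $T_3(Y)=4Y^3-3Y$. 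Invoking the nesting property $T_{3^{n+1}}(X)=T_3\bigl(T_{3^n}(X)\bigr)=4t^3-3t$ — the degree-three analogue of (\ref{powertn}), which follows from $T_m\circ T_n=T_{mn}$ via $T_k(\cos\theta)=\cos(k\theta)$ — gives at once $(X-1)P_+(n+1)=(2t+1)^2(t-1)=T_{3^{n+1}}(X)-1$ and $(X+1)P_-(n+1)=(2t-1)^2(t+1)=T_{3^{n+1}}(X)+1$, which closes the induction.

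The genuinely routine ingredients here are the two cubic expansions above; the only real content of the argument is twofold. First, one must notice that the stated $2\times2$ system decouples, which is what collapses the two intertwined products into the clean forms $T_{3^n}\mp1$; attempting a direct coupled induction forces one to invert the same system anyway, since the factors $\bigl(2T_{3^n}\pm1\bigr)^2$ attach separately to $P_+$ and $P_-$. Second, the squared factors appearing in (\ref{eqlemma0}) and (\ref{eqlemma}) are explained precisely by the double roots in $T_3(t)\mp1=(t\mp1)(2t\pm1)^2$, so recognizing these factorizations is the crux of the computation. I expect the main point requiring care to be the correct use of the triplication identity $T_{3^{n+1}}=T_3\circ T_{3^n}$, on which the whole induction rests.
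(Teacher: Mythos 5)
Your proof is correct and follows essentially the same route as the paper: the paper also proves the two identities jointly by induction, "pivots" the $2\times 2$ system to obtain exactly your decoupled forms $(X-1)P_+(n)=T_{3^n}(X)-1$ and $(X+1)P_-(n)=T_{3^n}(X)+1$, and then closes the induction via the triplication $T_{3^{n+1}}=T_3\circ T_{3^n}$ with $T_3(t)=4t^3-3t$. The only difference is organizational — you make the decoupled identities the induction statement and recombine at the end, while the paper keeps the coupled pair as the statement and performs the pivot inside the induction step — which changes nothing of substance.
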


\begin{proof} First, the  equalities (\ref{eqlemma0}) and (\ref{eqlemma}) from Lemma \ref{lm:Hall} are proven jointly. Both initializations are immediate. The induction hypothesis consists of assuming that both equalities are verified for $n>0$. By pivoting on the system of equalities, it comes:
$$8(X-1)\prod_{i=1}^n(2T_{3^{i-1}}(X)+1)^2=8(T_{3^{n}}(X)-1)$$
$$8(X+1)\prod_{i=1}^n(2T_{3^{i-1}}(X)-1)^2=8(T_{3^{n}}(X)+1)$$

Therefore, the left-hand side of the first equality (\ref{eqlemma0}) at step $n+1$ can be written as follows:
$$(X-1)\prod_{i=1}^{n+1}(2T_{3^{i-1}}(X)+1)^2+3(X+1)\prod_{i=1}^{n+1}(2T_{3^{i-1}}(X)-1)^2$$
$$\begin{array}{llllll}&=&(T_{3^{n}}(X)-1)(2T_{3^{n}}(X)+1)^2+3(T_{3^{n}}(X)+1)(2T_{3^{n}}(X)-1)^2\\ &=&16T_{3^{n}}(X)^3-12T_{3^{n}}(X)+2 \\ 
&=&2(2(4T_{3^{n}}(X)^3-3T_{3^{n}}(X))+1) \\ &=& 2(2T_{3^{n+1}}(X) +1)\end{array}$$
This concludes the induction step for (\ref{eqlemma0})  and a quasi-identical  set of equalities can conclude the induction step for (\ref{eqlemma}).

The base case for Theorem \ref{th:Hall} is immediate from the empty-product rule. Let assume that the equality (\ref{hallEXPL}) is verified for $n>0$. As $x=M+b$, $r^2=M-b$ and $\frac{x+r^2}{x-r^2}=\frac{M}{b}$, $h_{n+1}$ can be expressed as follows:

\begin{equation}
\begin{array}{llllll}
h_{n+1}&=& \displaystyle h_n\frac{{h_{n}}^2+3(M+b)}{3{h_{n}}^2+(M+b)}\\
&=&\displaystyle h_n\frac{(M-b)\left(\prod_{i=1}^n \frac{2T_{3^{i-1}}(\frac{M}{b})+1}{2T_{3^{i-1}}( \frac{M}{b})-1}\right)^2+3(M+b)}{3(M-b)\left(\prod_{i=1}^n \frac{2T_{3^{i-1}}(\frac{M}{b})+1}{2T_{3^{i-1}}( \frac{M}{b})-1}\right)^2+(M+b)}\\
&=&\displaystyle h_n \frac{(\frac{M}{b}-1)\prod_{i=1}^n(2T_{3^{i-1}}(\frac{M}{b})+1)^2+3(\frac{M}{b}+1)\prod_{i=1}^n(2T_{3^{i-1}}(\frac{M}{b})-1)^2}{3(\frac{M}{b}-1)\prod_{i=1}^n(2T_{3^{i-1}}(\frac{M}{b})+1)^2+(\frac{M}{b}+1)\prod_{i=1}^n(2T_{3^{i-1}}(\frac{M}{b})-1)^2} \\
&=&\displaystyle h_n\frac{2(2T_{3^{n}}(\frac{M}{b})+1)}{2(2T_{3^{n}}(\frac{M}{b})-1)} \\
&=&r\displaystyle \prod_{i=1}^{n+1} \frac{2T_{3^{i-1}}\left( \frac{x+r^2}{x-r^2}\right)+1}{2T_{3^{i-1}}\left( \frac{x+r^2}{x-r^2}\right)-1}
\end{array}
\label{hn1}
\end{equation}

The penultimate equality in (\ref{hn1}) is using the results from Lemma  \ref{lm:Hall} with $X=\frac{M}{b}$. This ends the induction step and the proof of Theorem \ref{th:Hall}.

\end{proof}

A simple algorithm to compute $h_n$ is implemented in Algorithm 3 with input $x$, the initialization $r$, a non-negative index $n$ and with output $S$. Polynomial expressions of the Halley's method for inverse square roots is not derived in the current paper.

\begin{algorithm}[!ht]
\label{algohall}
\caption{$\;\;\;S=h(x,r,n)$}
\begin{algorithmic}[1]
\item $S \gets r$   \phantom{spaceenough}/*$h_0$*/
\item $T=\frac{x+r^2}{x-r^2}$
\For{i=1:n}
\item $S \gets S(1+\frac{2}{2T-1})$  \phantom{space}/*$h_i$*/
\item $T \gets T\times(4T^2-3)$     
\EndFor
\State ${\bf return} \displaystyle \;\; S$
\end{algorithmic}
\end{algorithm}

\section{Polynomial expressions in the Householder's method for square roots}

The Householder's method \cite{Householder} is an extension of both the Newton's method and the Halley's method. Considering a positive integer $d$ and the function $g(t)=\frac{1}{t^2-x}$ where $x$ is a positive real number, the Householder's method of order $d$ for square roots  is provided in (\ref{defHouse}) with initial guess $r$. This iterative algorithm is such that  the sequence $\{\mathcal{H}_{n}\}_{n \geq 0}$ converges to $\sqrt{x}$ with a rate of convergence of $d+1$.

\begin{equation}
\mathcal{H}_0=r \;\;,\;\; (\forall n \geq 0) \;\;\; \mathcal{H}_{n+1} = \mathcal{H}_n + d \frac{g^{(d-1)}(\mathcal{H}_{n})}{g^{(d)}(\mathcal{H}_{n})}
\label{defHouse}
\end{equation}

An explicit expression of the sequence is presented in Theorem \ref{th:Houseun}. After deriving a few elementary expressions in Lemma  \ref{lm:Houseun}, a proof of Theorem \ref{th:Houseun} is presented. The sequence $\mathcal{H}_{n}$ naturally depends on the order $d$, but $d$ is omitted in the notation for the sake of simplicity. We can notice that the Newton's method from (\ref{FirstNewt}) can be obtained with $d=1$ while the Halley's method from (\ref{defHall}) can be obtained with $d=2$.

\begin{theorem}\label{th:Houseun}

Considering the Householder's method of order $d$ for $\sqrt{x}$ with a  starting point $r$, the corresponding sequence $\{{\mathcal{H}_{n}}\}_{n \geq 0}$  can be expressed as follows:
\begin{equation}
\mathcal{H}_0=r \;\;,\;\; (\forall n \geq 0) \;\;\; \mathcal{H}_{n+1} = \frac{\displaystyle  \sum_{k=0}^{\myceil{\frac{d}{2}}} {{d+1}\choose{2k}} \mathcal{H}_{n}^{d+1-2k}x^k }{\displaystyle  \sum_{k=0}^{\myfloor{\frac{d}{2}}} {{d+1}\choose{1+2k}} \mathcal{H}_{n}^{d-2k}x^k }
\label{defHouse2}
\end{equation}
\end{theorem}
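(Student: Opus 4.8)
The plan is to bypass induction entirely: the claimed formula (\ref{defHouse2}) is a single per-step algebraic identity, so I would obtain a closed form for the derivatives $g^{(m)}$ and substitute directly into the iteration (\ref{defHouse}). Since $g(t)=\frac{1}{t^2-x}=\frac{1}{(t-\sqrt{x})(t+\sqrt{x})}$, a partial fraction decomposition gives $g(t)=\frac{1}{2\sqrt{x}}\bigl(\frac{1}{t-\sqrt{x}}-\frac{1}{t+\sqrt{x}}\bigr)$, whence for every $m\geq 0$
\begin{equation}
g^{(m)}(t)=\frac{(-1)^m m!}{2\sqrt{x}}\left(\frac{1}{(t-\sqrt{x})^{m+1}}-\frac{1}{(t+\sqrt{x})^{m+1}}\right).
\end{equation}
This is the kind of elementary expression I expect to be recorded in Lemma~\ref{lm:Houseun}. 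Writing $a=\mathcal{H}_n-\sqrt{x}$ and $c=\mathcal{H}_n+\sqrt{x}$, the factorials and signs in the quotient $g^{(d-1)}/g^{(d)}$ telescope, so that $d\,\frac{g^{(d-1)}(\mathcal{H}_n)}{g^{(d)}(\mathcal{H}_n)}=-\frac{a^{-d}-c^{-d}}{a^{-d-1}-c^{-d-1}}$.

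Next I would clear denominators by multiplying numerator and denominator by $a^{d+1}c^{d+1}$; using $ac=\mathcal{H}_n^2-x$ the correction term collapses to $-\frac{ac\,(c^{d}-a^{d})}{c^{d+1}-a^{d+1}}$. Adding $\mathcal{H}_n$ and placing everything over $c^{d+1}-a^{d+1}$, the numerator becomes $c^{d+1}(\mathcal{H}_n-a)-a^{d+1}(\mathcal{H}_n-c)$. The two trivial identities $\mathcal{H}_n-a=\sqrt{x}$ and $\mathcal{H}_n-c=-\sqrt{x}$ factor out a $\sqrt{x}$ and deliver the compact form
\begin{equation}
\mathcal{H}_{n+1}=\sqrt{x}\,\frac{c^{d+1}+a^{d+1}}{c^{d+1}-a^{d+1}}=\sqrt{x}\,\frac{(\mathcal{H}_n+\sqrt{x})^{d+1}+(\mathcal{H}_n-\sqrt{x})^{d+1}}{(\mathcal{H}_n+\sqrt{x})^{d+1}-(\mathcal{H}_n-\sqrt{x})^{d+1}}.
\end{equation}

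Finally I would expand both $(d+1)$-th powers by the binomial theorem and split each into its even-$j$ and odd-$j$ parts. Setting $A=\sum_{k}\binom{d+1}{2k}\mathcal{H}_n^{d+1-2k}x^{k}$ and $B=\sum_{k}\binom{d+1}{2k+1}\mathcal{H}_n^{d-2k}x^{k}$, one has $(\mathcal{H}_n\pm\sqrt{x})^{d+1}=A\pm B\sqrt{x}$, so the sum of the two powers is $2A$, their difference is $2B\sqrt{x}$, and the $\sqrt{x}$ cancels to leave $\mathcal{H}_{n+1}=A/B$, which is exactly (\ref{defHouse2}). The cancellation of $\sqrt{x}$ also confirms that the result is a genuine rational function of $\mathcal{H}_n$ and $x$. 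The only genuinely delicate point is the bookkeeping of the summation ranges: the even exponents $j=2k$ reach $k=\myceil{\frac{d}{2}}$ while the odd exponents $j=2k+1$ reach $k=\myfloor{\frac{d}{2}}$, and one must check these bounds against the parity of $d+1$ so that no term is dropped or double-counted; a short case split on $d$ even versus $d$ odd settles it. The remaining algebra is routine.
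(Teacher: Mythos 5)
Your argument is correct and follows essentially the same route as the paper: both reduce the Householder step to the closed form $\mathcal{H}_{n+1}=\sqrt{x}\,\frac{(\mathcal{H}_n+\sqrt{x})^{d+1}+(\mathcal{H}_n-\sqrt{x})^{d+1}}{(\mathcal{H}_n+\sqrt{x})^{d+1}-(\mathcal{H}_n-\sqrt{x})^{d+1}}$ and then split the binomial expansions by parity exactly as in Lemma~\ref{lm:Houseun}. The only cosmetic difference is that you compute the derivatives by partial fractions over $\mathbb{R}(\sqrt{x})$, whereas the paper normalizes to $G(t)=\frac{1}{t^2-1}$ and uses the Leibniz rule plus a geometric sum before rescaling by $t\mapsto tx^{-1/2}$; the summation-range bookkeeping you flag checks out ($k\leq\myceil{\frac{d}{2}}$ for even exponents, $k\leq\myfloor{\frac{d}{2}}$ for odd).
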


\begin{lemma}\label{lm:Houseun}
Considering the function $G(t)=\frac{1}{t^2-1}$, its $p$-th derivative is as follows:
\begin{equation} 
G^{(p)}(t)=\frac{(-1)^p p!}{2(t^2-1)^{p+1}} \displaystyle ((t+1)^{p+1} -(t-1)^{p+1})
\label{fderivn}
\end{equation}
In addition, the following equalities are verified for any real $t$ and any integer $p$:
$$\frac{1}{2}((t+1)^{p+1} -(t-1)^{p+1})=\sum_{k=0}^{\myfloor{\frac{p}{2}}} {{p+1}\choose{1+2k}} t^{p-2k}$$
\begin{equation} 
\frac{1}{2}((t+1)^{p+1} +(t-1)^{p+1})=\sum_{k=0}^{\myceil{\frac{p}{2}}} {{p+1}\choose{2k}} t^{p+1-2k}
\label{fdiff}
\end{equation}
\end{lemma}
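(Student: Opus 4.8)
The plan is to handle the derivative formula and the two binomial identities separately, since each reduces to a standard elementary computation. For the derivative formula \eqref{fderivn}, I would avoid induction and instead use partial fractions. Writing
$$G(t)=\frac{1}{t^2-1}=\frac{1}{2}\left(\frac{1}{t-1}-\frac{1}{t+1}\right),$$
and recalling the elementary fact that $\dfrac{d^p}{dt^p}\dfrac{1}{t-a}=\dfrac{(-1)^p\,p!}{(t-a)^{p+1}}$, term-by-term differentiation yields
$$G^{(p)}(t)=\frac{(-1)^p\,p!}{2}\left(\frac{1}{(t-1)^{p+1}}-\frac{1}{(t+1)^{p+1}}\right).$$
Placing the two fractions over the common denominator $(t-1)^{p+1}(t+1)^{p+1}=(t^2-1)^{p+1}$ produces the numerator $(t+1)^{p+1}-(t-1)^{p+1}$, which is exactly \eqref{fderivn}. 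An induction on $p$ is possible as well, but the partial-fraction route is cleaner and sidesteps a bookkeeping-heavy differentiation of a quotient.

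For the two identities in \eqref{fdiff}, the idea is to expand both $(t+1)^{p+1}$ and $(t-1)^{p+1}$ by the binomial theorem and exploit parity. From
$$(t\pm 1)^{p+1}=\sum_{j=0}^{p+1}\binom{p+1}{j}(\pm 1)^{\,j}\,t^{\,p+1-j},$$
I observe that forming the difference $(t+1)^{p+1}-(t-1)^{p+1}$ annihilates every even-$j$ term and doubles every odd-$j$ term, while forming the sum $(t+1)^{p+1}+(t-1)^{p+1}$ doubles the even-$j$ terms and cancels the odd ones. Substituting $j=2k+1$ in the difference and $j=2k$ in the sum, then dividing by two, delivers the two claimed expansions.

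The only step that calls for genuine care is pinning down the range of the summation index $k$, so I would treat this explicitly. In the difference, the odd indices $j=2k+1$ satisfy $1\le 2k+1\le p+1$, giving $0\le k\le\myfloor{\frac{p}{2}}$; in the sum, the even indices $j=2k$ satisfy $0\le 2k\le p+1$, giving $0\le k\le\myfloor{\frac{p+1}{2}}=\myceil{\frac{p}{2}}$, where the last equality is the standard floor–ceiling identity. Matching these ranges to the stated upper limits $\myfloor{\frac{p}{2}}$ and $\myceil{\frac{p}{2}}$ closes the argument. This index bookkeeping, rather than any conceptual difficulty, is the main obstacle, and once it is settled the lemma follows immediately.
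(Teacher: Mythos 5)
Your proof is correct, but your route to the derivative formula differs from the paper's. The paper writes $G(t)=(t+1)^{-1}(t-1)^{-1}$ as a \emph{product} and applies the Leibniz rule, obtaining
$G^{(p)}(t)=\frac{(-1)^p p!}{(t^2-1)^{p+1}}\sum_{k=0}^p (t-1)^{p-k}(t+1)^k$,
then recognizes the sum as a geometric series with ratio $\frac{t+1}{t-1}$ and sums it in closed form to reach \eqref{fderivn}. You instead split $G$ as a \emph{sum} via partial fractions, $G(t)=\frac{1}{2}\left(\frac{1}{t-1}-\frac{1}{t+1}\right)$, and differentiate term by term; putting the result over the common denominator $(t^2-1)^{p+1}$ gives the same numerator $(t+1)^{p+1}-(t-1)^{p+1}$ directly. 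Your version is arguably cleaner, since it avoids both the Leibniz bookkeeping and the geometric-series summation (which in the paper silently requires $t\neq\pm1$ for the ratio to make sense, though the final identity extends by continuity). For the two identities in \eqref{fdiff}, the paper simply asserts they follow from the binomial expansion and parity arguments; you carry out exactly that computation and, usefully, make the index-range check explicit, including the identity $\myfloor{\frac{p+1}{2}}=\myceil{\frac{p}{2}}$ that justifies the stated upper limit in the second sum. No gaps.
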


\begin{proof} First, the expression of $G^{(p)}$ in Lemma \ref{lm:Houseun} is derived. As $G(t)=(t+1)^{-1}\times (t-1)^{-1}$, applying the Leibniz rule to $G$ allows us to obtain:

$$G^{(p)}(t)=\frac{(-1)^p p!}{(t^2-1)^{p+1}} \displaystyle \sum_{k=0}^p (t-1)^{p-k}(t+1)^k$$

The previous expression is a geometric series with common ratio $\frac{t+1}{t-1}$ an its closed-formed formula is (\ref{fderivn}). Next, both equalities in (\ref{fdiff}) can be obtained using the binomial expansion and elementary parity arguments. Finally, the expression (\ref{defHouse}) from Theorem \ref{th:Houseun} is derived. By noticing that $g(t)=\frac{1}{x} G(tx^{-1/2})$, Expression (\ref{defHouse}) can be written as follows:

$$\mathcal{H}_{n+1} = \mathcal{H}_n + d \sqrt{x} \frac{G^{(d-1)}(\mathcal{H}_{n}x^{-1/2})}{G^{(d)}(\mathcal{H}_{n}x^{-1/2})}  $$

Using successively (\ref{fderivn})  and (\ref{fdiff}) from Lemma \ref{lm:Houseun}, the previous equality becomes:

\begin{equation}
\begin{array}{llllll}
\mathcal{H}_{n+1} &=& \left. t\sqrt{x} - \sqrt{x}(t^2-1)\frac{(t+1)^d-(t-1)^d}{(t+1)^{d+1}-(t-1)^{d+1}} \right|_{t=\mathcal{H}_n x^{-1/2}} \\
&=&\left.  \sqrt{x}\frac{(t+1)^{d+1}+(t-1)^{d+1}}{(t+1)^{d+1}-(t-1)^{d+1}}    \right|_{t=\mathcal{H}_n x^{-1/2}} \\
&=& \left. 		\sqrt{x}\frac{\displaystyle  \sum_{k=0}^{\myceil{\frac{d}{2}}} {{d+1}\choose{2k}} t^{d+1-2k} }{\displaystyle  \sum_{k=0}^{\myfloor{\frac{d}{2}}} {{d+1}\choose{1+2k}} t^{d-2k} }						\right|_{t=\mathcal{H}_n x^{-1/2}} \\
&=& \frac{\displaystyle  \sum_{k=0}^{\myceil{\frac{d}{2}}} {{d+1}\choose{2k}} \mathcal{H}_{n}^{d+1-2k}x^k }{\displaystyle  \sum_{k=0}^{\myfloor{\frac{d}{2}}} {{d+1}\choose{1+2k}} \mathcal{H}_{n}^{d-2k}x^k }
\end{array}
\label{findefh}
\end{equation}

\end{proof}

Before providing an explicit expression of $\mathcal{H}_{n}$, the Chebyshev polynomials of the third kind $\{ V_n\}_{n \geq 0}$ and of the fourth kind $\{ W_n\}_{n \geq 0}$ are introduced in (\ref{TcheV}) and (\ref{TcheW}), respectively \cite{Mason20021,AGHIGH20082}.
\begin{equation} 
V_0(X)=1 \;\;,\;\; V_1(X)=2X-1 \;\;,\;\; (\forall n \geq 0) \;\; V_{n+2}(X)=2XV_{n+1}(X)-V_{n}(X)
\label{TcheV}
\end{equation}
\begin{equation} 
W_0(X)=1 \;\;,\;\; W_1(X)=2X+1 \;\;,\;\; (\forall n \geq 0) \;\; W_{n+2}(X)=2XW_{n+1}(X)-W_{n}(X)
\label{TcheW}
\end{equation}

The expression of $\mathcal{H}_{n}$ is presented in  Theorem \ref{th:HouseZE} either from Chebyshev polynomials or from monomials while distinguishing whether the order is odd or even. A proof of Theorem \ref{th:HouseZE} follows.

\begin{theorem}\label{th:HouseZE}
Considering the Householder's method of order $d$ for $\sqrt{x}$ with a  starting point $r$, the corresponding sequence $\{{\mathcal{H}_{n}}\}_{n \geq 0}$  can be expressed in terms of Chebyshev polynomials as follows:

\begin{enumerate}
 \item If $d$ is even and $n$ greater or equal than 0, we obtain:
\begin{equation} 
\mathcal{H}_{n}=\displaystyle r  \left. \frac{\displaystyle W_{\textstyle \frac{(d+1)^n-1}{2}}\left(X\right)}{\displaystyle V_{ \textstyle \frac{(d+1)^n-1}{2}}\left(X\right)}\right|_{X=\left(\frac{x+r^2}{x-r^2}\right)}
\label{HNPAIR}
\end{equation}
\item If $d$ is odd  and $n$ greater or equal than 1, we obtain:
\begin{equation} 
\mathcal{H}_{n}=  \left. \displaystyle   r\frac{\displaystyle T_{ \textstyle \frac{(d+1)^n}{2}}\left(X\right)}{\displaystyle(X-1) U_{ \textstyle \frac{(d+1)^n}{2}-1}\left(X\right)}\right|_{X=\left(\frac{x+r^2}{x-r^2}\right)}
\label{HNIMPAIR}
\end{equation}

\end{enumerate}

Furthermore, the sequence $\{\mathcal{H}_{n}\}_{n \geq 0}$ can be expressed in function of monomials of $X=\left(\frac{x+r^2}{x-r^2}\right)$ such as follows:

\begin{enumerate}
 \item If $d$ is even:
\begin{equation} 
\mathcal{H}_{n}= \displaystyle r  \left. \displaystyle \prod_{k=1}^{\frac{(d+1)^n-1}{2}} \frac{\displaystyle X-cos\left( \frac{2 k \pi}{(d+1)^n} \right)}{\displaystyle X-cos\left( \frac{(2 k-1) \pi}{(d+1)^n} \right)} \right|_{X=\left(\frac{x+r^2}{x-r^2}\right)}
\label{HNPAIRMON}
\end{equation}
\item If $d$ is odd: 
\begin{equation} 
\mathcal{H}_{n}= \displaystyle r  \left. \displaystyle \prod_{k=0}^{\frac{(d+1)^n}{2}-1} \frac{\displaystyle X-cos\left( \frac{(2 k+1) \pi}{(d+1)^n} \right)}{\displaystyle X-cos\left( \frac{2 k \pi}{(d+1)^n} \right)} \right|_{X=\left(\frac{x+r^2}{x-r^2}\right)}
\label{HNIMPAIRMON}
\end{equation}
\end{enumerate}
\end{theorem}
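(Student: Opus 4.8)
The plan is to linearize the Householder iteration by a Möbius substitution that turns it into a pure power map. Starting from the rational recurrence of Theorem \ref{th:Houseun} (equivalently, from the form $\mathcal{H}_{n+1}=\sqrt{x}\,\frac{(t+1)^{d+1}+(t-1)^{d+1}}{(t+1)^{d+1}-(t-1)^{d+1}}$ with $t=\mathcal{H}_n x^{-1/2}$ obtained via Lemma \ref{lm:Houseun}), I would set $t_n=\mathcal{H}_n x^{-1/2}$ and introduce $w_n=\frac{t_n-1}{t_n+1}$. Since $t+1=\frac{2}{1-w}$ and $t-1=\frac{2w}{1-w}$ when $t=\frac{1+w}{1-w}$, the binomials collapse to $(t\pm 1)^{d+1}=\left(\frac{2}{1-w}\right)^{d+1}(1\pm w^{d+1})$, whence $t_{n+1}=\frac{1+w_n^{d+1}}{1-w_n^{d+1}}$, i.e. $w_{n+1}=w_n^{d+1}$. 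Iterating gives the closed form $w_n=w_0^{(d+1)^n}$, the single identity that drives the whole proof.

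Next I would return to the original variables. From $w_0=\frac{r-\sqrt{x}}{r+\sqrt{x}}$ one gets $\sqrt{x}=r\frac{1-w_0}{1+w_0}$, so with $N=(d+1)^n$,
\begin{equation*}
\frac{\mathcal{H}_n}{r}=\frac{(1-w_0)(1+w_0^{N})}{(1+w_0)(1-w_0^{N})},\qquad w_0+w_0^{-1}=\frac{(r-\sqrt{x})^2+(r+\sqrt{x})^2}{r^2-x}=-2X.
\end{equation*}
The right-hand side is invariant under $w_0\mapsto w_0^{-1}$, hence it is a rational function of $w_0+w_0^{-1}$ and therefore of $X$. Consequently it suffices to verify the four claimed formulas as identities of rational functions in $X$, which I would do on the interval $X\in(-1,1)$ through the parametrization $X=\cos\phi$, i.e. $w_0=-e^{i\phi}$; validity at $X=\frac{x+r^2}{x-r^2}$ then follows because two rational functions agreeing on an interval coincide, which neatly sidesteps the hyperbolic/trigonometric branch issues arising from $|X|>1$.

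With $X=\cos\phi$ and the trigonometric forms $T_m(\cos\phi)=\cos m\phi$, $U_{m-1}(\cos\phi)=\frac{\sin m\phi}{\sin\phi}$, $V_p(\cos\phi)=\frac{\cos((2p+1)\phi/2)}{\cos(\phi/2)}$ and $W_p(\cos\phi)=\frac{\sin((2p+1)\phi/2)}{\sin(\phi/2)}$ (the last two checked against the recurrences (\ref{TcheV})--(\ref{TcheW}) via elementary angle identities), I would split on the parity of $d$. If $d$ is odd then $N=2m$ is even, $w_0^N=e^{iN\phi}$, and a short simplification of the ratio above gives $\frac{\mathcal{H}_n}{r}=-\cot\frac{\phi}{2}\cot m\phi$; using $X-1=-2\sin^2(\phi/2)$ and $\sin\phi=2\sin(\phi/2)\cos(\phi/2)$ this is exactly $\frac{T_m(X)}{(X-1)U_{m-1}(X)}$ with $m=\frac{(d+1)^n}{2}$, which is (\ref{HNIMPAIR}). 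If $d$ is even then $N=2p+1$ is odd, $w_0^N=-e^{iN\phi}$, and the analogous computation yields $\frac{\mathcal{H}_n}{r}=\cot\frac{\phi}{2}\tan\frac{N\phi}{2}=\frac{W_p(X)}{V_p(X)}$ with $p=\frac{(d+1)^n-1}{2}$, which is (\ref{HNPAIR}).

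Finally, the monomial expressions (\ref{HNPAIRMON}) and (\ref{HNIMPAIRMON}) would follow, exactly as in the proof of Corollary \ref{th:conjTcheCoro}, by factoring the Chebyshev polynomials over their roots: $T_m$ vanishes at $\cos\frac{(2k+1)\pi}{N}$ for $0\le k\le m-1$, $U_{m-1}$ at $\cos\frac{2k\pi}{N}$ for $1\le k\le m-1$ with the extra pole $X=1=\cos 0$ supplied by the factor $X-1$, while $V_p$ and $W_p$ vanish at $\cos\frac{(2k-1)\pi}{N}$ and $\cos\frac{2k\pi}{N}$ for $1\le k\le p$ respectively; in each ratio the leading coefficients $2^{m-1}$ (resp. $2^{p}$) cancel, leaving precisely the stated products. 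I expect the main obstacle to be neither the substitution nor the monomial step but the bookkeeping of the two parity cases: pinning down the correct half-angle closed forms for the third- and fourth-kind polynomials and tracking the sign $w_0^N=(-1)^N e^{iN\phi}$ so that the odd case lands on $T/U$ and the even case on $W/V$.
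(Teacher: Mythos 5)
Your proof is correct, and it takes a genuinely different route from the paper's. The paper proves the even case by induction on $n$: it forms the rational function $M_e$ obtained by applying one Householder step to $K_e=W/V$, shows by degree and leading-coefficient counting that $M_e$ is a ratio of polynomials of degree $\frac{(d+1)^{n+1}-1}{2}$, locates its roots and poles by evaluating $W$ and $V$ at the cosines $\cos\frac{2k\pi}{(d+1)^{n+1}}$ and $\cos\frac{(2k-1)\pi}{(d+1)^{n+1}}$ and invoking the binomial expansion of $\bigl(e^{ik\pi/(d+1)}\bigr)^{d+1}=(-1)^k$, and only sketches the odd case. You instead conjugate the iteration by the M\"obius map $w=\frac{t-1}{t+1}$, which collapses it to the power map $w_{n+1}=w_n^{d+1}$; this gives the closed form $w_n=w_0^{(d+1)^n}$ (essentially Theorem \ref{th:sqrtxR}, which the paper only establishes later and by a separate induction), after which each of (\ref{HNPAIR}) and (\ref{HNIMPAIR}) reduces to a one-line half-angle identity under $X=\cos\phi$, and the monomial forms follow by the same root-factorization used in Corollary \ref{th:conjTcheCoro}. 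Your computations check out: $w_0+w_0^{-1}=-2X$, the symmetry under $w\mapsto w^{-1}$ correctly guarantees that $\mathcal{H}_n/r$ is a rational function of $X$, agreement on $(-1,1)$ legitimately extends to $X=\frac{x+r^2}{x-r^2}$ by the identity principle, and the two trigonometric evaluations $-\cot\frac{\phi}{2}\cot m\phi$ and $\cot\frac{\phi}{2}\tan\frac{N\phi}{2}$ match $\frac{T_m}{(X-1)U_{m-1}}$ and $\frac{W_p}{V_p}$ respectively (your root assignments for $V_p$ and $W_p$ are the correct ones, consistent with the theorem statement; the paper's proof has them momentarily transposed in prose). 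What your route buys is a shorter argument that treats both parities completely and uniformly and makes the link to Theorem \ref{th:sqrtxR} transparent; what the paper's route buys is that it never leaves the ring of rational functions in $X$ and constructs the root--pole structure of (\ref{HNPAIRMON}) directly along the way.
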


\begin{proof} The case for even $d=2p$ is first considered. Let us introduce the two following rational functions $K_e(X)= \frac{ W_{ \frac{(d+1)^n-1}{2}}\left(X\right)}{ V_{  \frac{(d+1)^n-1}{2}}\left(X\right)}$ and $L_e(X)=\prod_{k=1}^{\frac{(d+1)^n-1}{2}} \frac{ X-cos\left( \frac{2 k \pi}{(d+1)^n} \right)}{ X-cos\left( \frac{(2 k-1) \pi}{(d+1)^n} \right)}$.  Given an integer $k$, the degree of $V_k$ and $W_k$ is $k$, their leading coefficient is  $2^k$, and their roots are $\{\frac{2j \pi}{2k+1}\}_{j=1..k}$ and  $\{\frac{(2j-1) \pi}{2k+1}\}_{j=1..k}$, respectively \cite{Mason20021}. Therefore $K_e$ and $L_e$ are  identical and it implies that  the expressions (\ref{HNPAIR}) and (\ref{HNPAIRMON}) are the same.
Next, we evaluate $W_{ \frac{(d+1)^n-1}{2}}$ and $V_{ \frac{(d+1)^n-1}{2}}$
at $cos\left( \frac{2 k \pi}{(d+1)^{n+1}} \right)$ and $cos\left( \frac{(2 k-1) \pi}{(d+1)^{n+1}} \right)$ using the identities $V_k(cos(\theta))=\frac{cos((2k+1)\theta/2)}{cos(\theta/2)}$ and $W_k(cos(\theta))=\frac{sin((2k+1)\theta/2)}{sin(\theta/2)}$ \cite{Mason20021}:
\begin{equation}
\begin{array}{llllll}
\displaystyle W_{\textstyle \frac{(d+1)^n-1}{2}}\left(cos\left( \frac{2 k \pi}{(d+1)^{n+1}} \right)\right)&=&  \frac{\displaystyle sin\left(\frac{k \pi}{d+1}\right)}{\displaystyle sin \left( \frac{ k \pi}{(d+1)^{n+1}}\right)} \;\;,\;\;\\
\displaystyle V_{\textstyle \frac{(d+1)^n-1}{2}}\left(cos\left( \frac{2 k \pi}{(d+1)^{n+1}} \right)\right) &=&  \frac{\displaystyle cos\left( \frac{ k \pi}{(d+1)}\right)}{\displaystyle cos \left(\frac{k \pi}{(d+1)^{n+1}}\right)}\\
\displaystyle W_{\textstyle \frac{(d+1)^n-1}{2}}\left(cos\left( \frac{(2 k-1) \pi}{(d+1)^{n+1}} \right)\right) &=& \frac{\displaystyle sin\left(\frac{(2k-1) \pi}{2(d+1)}\right)}{\displaystyle sin \left( \frac{ (2k-1) \pi}{2(d+1)^{n+1}}\right)} \;\;,\;\; \\
\displaystyle V_{\textstyle \frac{(d+1)^n-1}{2}}\left(cos\left( \frac{(2 k-1) \pi}{(d+1)^{n+1}} \right)\right) &=& \frac{\displaystyle cos\left( \frac{ (2k-1) \pi}{2(d+1)}\right)}{\displaystyle cos \left(\frac{(2k-1) \pi}{2(d+1)^{n+1}}\right)} \\
\end{array}
\label{wprootnp1}
\end{equation}

We prove now by induction that the expressions (\ref{defHouse2}) and (\ref{HNPAIR}) are identical. The base case for $n=0$ is immediate from the empty-product rule. Let us assume that both expressions are identical for $n>0$. Let us introduce a rational function $M_e$ as follows:
\begin{equation}M_e(X)= K_e(X)\frac{\displaystyle  \sum_{j=0}^{p} {{2p+1}\choose{2j}} (X-1)^{p-j} K_e(X)^{2p-2j}(X+1)^j }{\displaystyle  \sum_{j=0}^{p} {{2p+1}\choose{1+2j}} (X-1)^{p-j} K_e(X)^{2p-2j}(X+1)^j }\label{Medeb}\end{equation}

As for $K_e$, it is straightforward to see that $M_e$ is the ratio of two polynomials of degree $\frac{(d+1)^{n+1}-1}{2}$ and that the overall leading coefficient of $M_e$ is 1.  $M_e$ can be expressed directly in terms of a ratio of polynomials of degree $\frac{(d+1)^{n+1}-1}{2}$ as follows:

$$M_e(X)= \frac{\displaystyle  \sum_{j=0}^{p} {{2p+1}\choose{2j}} (X-1)^{p-j}(X+1)^j \left[ W_{\textstyle \frac{(d+1)^n-1}{2}}\left(X\right)\right]^{2p+1-2j}\left[ V_{\textstyle \frac{(d+1)^n-1}{2}}\left(X\right)\right]^{2j} }{\displaystyle  \sum_{j=0}^{p} {{2p+1}\choose{1+2j}} (X-1)^{p-j}(X+1)^j \left[ W_{\textstyle \frac{(d+1)^n-1}{2}}\left(X\right)\right]^{2p-2j}\left[ V_{\textstyle \frac{(d+1)^n-1}{2}}\left(X\right)\right]^{2j+1} }$$

We will prove now that the roots of $M_e(X)$ are $\left\{cos\left( \frac{2 k \pi}{(d+1)^{n+1}} \right)\right \}$ while the poles of $M_e(X)$ are $\left \{cos\left( \frac{(2k-1) \pi}{(d+1)^{n+1}} \right) \right \}$ for any $k$ in $\left \{1..\frac{(d+1)^{n+1}-1}{2}\right \}$. The trigonometric identities of $cos(2\theta)\pm 1$ and the results from (\ref{wprootnp1}) are used and we obtain:

$$M_e\left(cos\left( \frac{2 k \pi}{(d+1)^{n+1}} \right)\right)=$$
\begin{equation}
\begin{array}{llllll}
\frac{\displaystyle cos\left( \frac{ k \pi}{(d+1)^{n+1}} \right)}{\displaystyle sin\left( \frac{ k \pi}{(d+1)^{n+1}} \right)} \frac{\displaystyle  \sum_{j=0}^{p} {{2p+1}\choose{2j}} (-1)^{p-j} sin \left(\frac{k \pi}{d+1}\right)^{2p+1-2j}cos\left(\frac{k \pi}{d+1}\right)^{2j} }{\displaystyle  \sum_{j=0}^{p} {{2p+1}\choose{1+2j}} (-1)^{p-j} sin \left(\frac{k \pi}{d+1} \right)^{2p-2j}cos \left(\frac{k \pi}{d+1} \right)^{2j+1} } \nonumber
\end{array}
\end{equation}
$$\displaystyle \frac{1}{M_e}\left(cos\left( \frac{(2 k-1) \pi}{(d+1)^{n+1}} \right)\right)=$$
\begin{equation}
\begin{array}{llllll}
\frac{\displaystyle sin \left(\frac{(2k-1) \pi}{2(d+1)^{n+1}} \right)}{\displaystyle cos \left(\frac{(2k-1) \pi}{2(d+1)^{n+1}} \right)} \frac{\displaystyle  \sum_{j=0}^{p} {{2p+1}\choose{1+2j}} (-1)^{p-j} sin \left(\frac{ (2k-1) \pi}{2(d+1)} \right)^{2p-2j}cos \left(\frac{ (2k-1) \pi}{2(d+1)} \right)^{2j+1} }{\displaystyle  \sum_{j=0}^{p} {{2p+1}\choose{2j}} (-1)^{p-j} sin \left(\frac{ (2k-1) \pi}{2(d+1)} \right)^{2p+1-2j}cos \left(\frac{ (2k-1) \pi}{2(d+1)} \right)^{2j} } 
\end{array}
\label{meuh}
\end{equation}

Next, we apply the binomial expansion to $\left[exp(\frac{ik \pi}{d+1})\right]^{d+1}$ and  $\left[exp(\frac{i(2k-1) \pi}{2(d+1)})\right]^{d+1}$:
$$\left[exp(\frac{ik \pi}{d+1})\right]^{d+1}=\sum_{j=0}^{2p+1}{{2p+1}\choose{j}} cos(\frac{k \pi}{d+1})^j sin(\frac{k \pi}{d+1})^{2p+1-j} i^{2p+1-j} =(-1)^{k}+ 0i$$
$$\left[exp(\frac{i(2k-1) \pi}{2(d+1)})\right]^{d+1}=\sum_{j=0}^{2p+1}{{2p+1}\choose{j}} cos(\frac{(2k-1) \pi}{2(d+1)})^j sin(\frac{(2k-1) \pi}{2(d+1)})^{2p+1-j} i^{2p+1-j} =0+(-1)^{k+1}i$$

Identifying the real and imaginary parts of the previous identities in (\ref{meuh}) , it comes automatically that $M_e\left(cos\left( \frac{2 k \pi}{(d+1)^{n+1}} \right)\right)=0$ and $\frac{1}{M_e}\left(cos\left( \frac{(2 k-1) \pi}{(d+1)^{n+1}} \right)\right)=0$. It implies that $M_e$ has the following monomial factorization: 

\begin{equation} 
M_e(X)=    \displaystyle \prod_{k=1}^{\frac{(d+1)^{n+1}-1}{2}} \frac{\displaystyle X-cos\left( \frac{2 k \pi}{(d+1)^{n+1}} \right)}{\displaystyle X-cos\left( \frac{(2 k-1) \pi}{(d+1)^{n+1}} \right)} 
\label{Mefin}
\end{equation}

Let us evaluate $r M_e(X)$ for $X=\left(\frac{x+r^2}{x-r^2}\right)$. From (\ref{Mefin}),  $r M_e(X)$ corresponds to the expression developed in  (\ref{HNPAIRMON}) at step $n+1$ or equivalently to the expression developed in  (\ref{HNPAIR}) at step $n+1$. Also $r K_e\left(\frac{x+r^2}{x-r^2}\right)$ is equal to $\mathcal{H}_{n}$. Noting that $X-1=\frac{2r^2}{x-r^2}$ and $X+1=\frac{2x}{x-r^2}$, $r M_e(X)$ from (\ref{Medeb}) can be assessed as follows:

\begin{equation}
\begin{array}{llllll}
r M_e\left(\left(\frac{x+r^2}{x-r^2}\right)\right)&=& rK_e(X)\frac{\displaystyle  \sum_{j=0}^{p} {{2p+1}\choose{2j}} \left(\frac{2r^2}{x-r^2}\right)^{p-j} K_e(X)^{2p-2j}\left(\frac{2x}{x-r^2}\right)^j }{\displaystyle  \sum_{j=0}^{p} {{2p+1}\choose{1+2j}} \left(\frac{2r^2}{x-r^2}\right)^{p-j} K_e(X)^{2p-2j}\left(\frac{2x}{x-r^2}\right)^j }\\
&=& \frac{\displaystyle  \sum_{j=0}^{p} {{2p+1}\choose{2j}} (rK_e(X))^{2p+1-2j}x^j }{\displaystyle  \sum_{j=0}^{p} {{2p+1}\choose{1+2j}} (rK_e(X))^{2p-2j}x^j } \\
&=&\frac{\displaystyle  \sum_{j=0}^{\myceil{\frac{d}{2}}} {{d+1}\choose{2j}} \mathcal{H}_{n}^{d+1-2j}x^j }{\displaystyle  \sum_{j=0}^{\myfloor{\frac{d}{2}}} {{d+1}\choose{1+2j}} \mathcal{H}_{n}^{d-2j}x^j }
\end{array}
\label{rMe}
\end{equation}

$r M_e \left(\left(\frac{x+r^2}{x-r^2}\right)\right)$ has the expression of $\mathcal{H}_{n+1}$ provided in (\ref{defHouse2}). We have proven that the expressions in (\ref{defHouse2}) and (\ref{HNPAIR}) are identical at the step $n+1$,  which ends the proof by induction when $d$ is even.
$$$$
The case for odd $d$ is now considered. Let us consider the rational function $K_o=\frac{\displaystyle T_{\frac{(d+1)^n}{2}}\left(X\right)}{\displaystyle(X-1) U_{\frac{(d+1)^n}{2}-1}\left(X\right)}$. The roots of $K_o$ are $\left\{cos\left( \frac{(2 k+1) \pi}{(d+1)^n} \right)\right\}_{k=0..(d+1)^{n}/2-1}$ and their poles are 1 and $\left\{cos\left( \frac{2 k \pi}{(d+1)^n} \right)\right\}_{k=1..(d+1)^{n}/2-1}$. Evaluating $rK_o(X)$ at $X=\left(\frac{x+r^2}{x-r^2}\right)$ proves that Equations (\ref{HNIMPAIR}) and (\ref{HNIMPAIRMON}) are identical for $n \geq 1$. By now, the remainder of the proof, i.e. establishing the equality between (\ref{HNIMPAIRMON}) and  (\ref{defHouse2}) is almost identical to the case when $d$ is even and is left to the interested reader.

\end{proof}

Additional expressions of $\left\{\mathcal{H}_{n}\right\}$ or of its residual sequence $\displaystyle \left\{\mathcal{H}_{n+1}-\mathcal{H}_{n}\right\}$ are presented in Corollary \ref{th:Corodelta}, followed by a proof.

\begin{corollary}\label{th:Corodelta}

\begin{enumerate}
 \item When $d$ is even ($d=2p$), the sequence can be expressed as a product as follows:

\begin{equation} 
\mathcal{H}_{n}= \displaystyle r  \frac{\displaystyle\prod_{j=1}^n W_p\left(T_{(2p+1)^{j-1}}\left(\frac{x+r^2}{x-r^2}\right)\right)}{\displaystyle \prod_{j=1}^n V_p \left(T_{(2p+1)^{j-1}}\left(\frac{x+r^2}{x-r^2}\right)\right)}
\label{OLDNEWpair}
\end{equation}

Furthermore, the residual sequence can be expressed as follows:

\begin{equation} 
\mathcal{H}_{n+1}-\mathcal{H}_{n}=2\mathcal{H}_{n}\frac{U_{p-1}\left(T_{(2p+1)^{n}}\left(\frac{x+r^2}{x-r^2}\right)\right)}{V_p\left(T_{(2p+1)^{n}}\left(\frac{x+r^2}{x-r^2}\right)\right)}
\label{residPAIR}
\end{equation}

\item If $d$ is odd, it can be uniquely expressed as $d=2^{\alpha+1}p+2^\alpha-1$ where $\alpha$ is an integer greater or equal than 1 and $p$ is an integer. $\alpha$ corresponds to the 2-adic valuation of $(d+1)$. $\mathcal{H}_{n}$ can be expressed as follows for $n$ greater or equal than 1:
\begin{equation} 
\mathcal{H}_{n}= \displaystyle   \frac{r}{2^{\alpha n -1}}\left. \frac{\displaystyle T_{\frac{(d+1)^n}{2}}\left(X\right)}{\displaystyle (X-1)\prod_{j=0}^{n-2}T_{\frac{(d+1)^{j+1}}{2}}\left(X\right) \displaystyle \prod_{\substack{j=0..n-1 \\ k=0..\alpha-2}} T_{2^k(d+1)^{j}}\left(X\right)\displaystyle \prod_{j=0}^{n-1} W_p\left(T_{2^\alpha(d+1)^{j}}\left(X\right)\right) }\right|_{X=\left(\frac{x+r^2}{x-r^2}\right)}
\label{HNIMPAIRTWO}
\end{equation}

Additionally, $\mathcal{H}_{n}$ can be expressed recursively:

\begin{equation} 
\mathcal{H}_{n+1}= \displaystyle   \mathcal{H}_{n} \left. \frac{T_{d+1}(X_n)}{X_n U_d(X_n)}\right|_{X_n=T_{\frac{(d+1)^n}{2}}\left(\frac{x+r^2}{x-r^2}\right)}
\label{HNIMPAIRTWOPROD}
\end{equation}

Finally, the residual sequence can be expressed as follows:
\begin{equation} 
\mathcal{H}_{n+1}-\mathcal{H}_{n}= \displaystyle   -\mathcal{H}_{n} \left. \frac{U_{d-1}(X_n)}{X_n U_d(X_n)}\right|_{X_n=T_{\frac{(d+1)^n}{2}}\left(\frac{x+r^2}{x-r^2}\right)}
\label{HNIMPAIRTWOPRODDELTA}
\end{equation}

\end{enumerate}
\end{corollary}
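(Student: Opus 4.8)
The plan is to reduce every claimed identity to the explicit Chebyshev expressions (\ref{HNPAIR}) and (\ref{HNIMPAIR}) of Theorem \ref{th:HouseZE}, so that each statement becomes a pure polynomial identity which I verify through the substitution $X=\cos\theta$ and the evaluations $T_k(\cos\theta)=\cos(k\theta)$, $U_k(\cos\theta)=\sin((k+1)\theta)/\sin\theta$, $V_k(\cos\theta)=\cos((2k+1)\theta/2)/\cos(\theta/2)$ and $W_k(\cos\theta)=\sin((2k+1)\theta/2)/\sin(\theta/2)$ already invoked in the proof of that theorem. All the auxiliary Chebyshev identities below ($T_a(T_b(X))=T_{ab}(X)$, the composition rule $U_{ab-1}(X)=U_{a-1}(X)U_{b-1}(T_a(X))$, the split $U_{2^m-1}(X)=2^m\prod_{k=0}^{m-1}T_{2^k}(X)$, the collapse $U_{2p}(Y)=W_p(T_2(Y))$, and $XU_d(X)-T_{d+1}(X)=U_{d-1}(X)$) follow immediately from these trigonometric forms.

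For the even case $d=2p$, set $m_n=\frac{(2p+1)^n-1}{2}$ so that $2m_n+1=(2p+1)^n$. Then (\ref{HNPAIR}) reads $\mathcal{H}_n=r\,W_{m_n}(X)/V_{m_n}(X)$, and the trigonometric forms give $W_{m_n}(\cos\theta)/V_{m_n}(\cos\theta)=\tan((2p+1)^n\theta/2)\cot(\theta/2)$. Writing $Y=T_{(2p+1)^n}(X)$ and using $T_{(2p+1)^n}(\cos\theta)=\cos((2p+1)^n\theta)$, the single factor satisfies $W_p(Y)/V_p(Y)=\tan((2p+1)^{n+1}\theta/2)\cot((2p+1)^n\theta/2)$, so the product in (\ref{OLDNEWpair}) telescopes to $\tan((2p+1)^n\theta/2)\cot(\theta/2)=\mathcal{H}_n/r$; this proves (\ref{OLDNEWpair}). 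The residual (\ref{residPAIR}) then follows from $\mathcal{H}_{n+1}-\mathcal{H}_n=\mathcal{H}_n(W_p(Y)-V_p(Y))/V_p(Y)$ together with the one-line identity $W_p(Y)-V_p(Y)=2U_{p-1}(Y)$.

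For the odd case I would first prove the recursion (\ref{HNIMPAIRTWOPROD}) and its residual (\ref{HNIMPAIRTWOPRODDELTA}). Writing $N_n=(d+1)^n/2$ and $X_n=T_{N_n}(X)$, formula (\ref{HNIMPAIR}) gives $\mathcal{H}_{n+1}/\mathcal{H}_n=\frac{T_{N_{n+1}}(X)\,U_{N_n-1}(X)}{T_{N_n}(X)\,U_{N_{n+1}-1}(X)}$; applying $T_{N_{n+1}}(X)=T_{d+1}(X_n)$ and the composition rule $U_{N_{n+1}-1}(X)=U_{N_n-1}(X)U_d(X_n)$ (i.e. $U_{ab-1}=U_{a-1}U_{b-1}(T_a)$ with $a=N_n$, $b=d+1$) collapses the quotient to $T_{d+1}(X_n)/(X_n U_d(X_n))$, which is (\ref{HNIMPAIRTWOPROD}). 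Then (\ref{HNIMPAIRTWOPRODDELTA}) is immediate from $T_{d+1}(X_n)-X_n U_d(X_n)=-U_{d-1}(X_n)$.

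Finally, (\ref{HNIMPAIRTWO}) I would prove by induction on $n\geq 1$ using (\ref{HNIMPAIRTWOPROD}), the content being the factorization of the single factor $U_d(X_n)=U_{2^\alpha(2p+1)-1}(X_n)$ at each step. Peeling the power of two off first gives $U_{2^\alpha(2p+1)-1}(Y)=U_{2^\alpha-1}(Y)\,U_{2p}(T_{2^\alpha}(Y))$; then $U_{2^\alpha-1}(Y)=2^\alpha\prod_{k=0}^{\alpha-1}T_{2^k}(Y)$ and $U_{2p}(T_{2^\alpha}(Y))=W_p(T_{2^{\alpha+1}}(Y))$. Substituting $Y=X_n=T_{N_n}(X)$ and nesting $T_{2^k}(T_{N_n}(X))=T_{2^kN_n}(X)$ with $2^kN_n=2^{k-1}(d+1)^n$, the $k=0$ term yields the new factor $T_{(d+1)^n/2}(X)$, the terms $k=1,\dots,\alpha-1$ yield the new factors $T_{2^{k-1}(d+1)^n}(X)$, the $U_{2p}$ part yields $W_p(T_{2^\alpha(d+1)^n}(X))$, and the constants multiply $2^{\alpha n-1}$ into $2^{\alpha(n+1)-1}$; the base case $n=1$ is the same factorization applied to $U_{(d+1)/2-1}(X)$ in (\ref{HNIMPAIR}). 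The main obstacle is precisely this index bookkeeping: one must check that the produced indices $2^kN_n$ line up exactly with the two $T$-families and the $W_p$-family of (\ref{HNIMPAIRTWO}), and it is the collapse $U_{2p}(Y)=W_p(T_2(Y))$ — rather than the naive split $U_{2p}=V_pW_p$ — that removes the spurious $V_p$ factor and makes the single $W_p$ appear with the correct argument $T_{2^\alpha(d+1)^n}$.
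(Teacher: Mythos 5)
Your proposal is correct and follows essentially the same route as the paper: telescoping the $W_p/V_p$ product trigonometrically, the identities $W_p-V_p=2U_{p-1}$, $T_{n}\circ T_{m}=T_{nm}$, $U_{nm-1}=U_{m-1}(T_n)U_{n-1}$ and $T_{d+1}=XU_d-U_{d-1}$ for the residuals and the recursion, and the splitting of $U_{2^{\alpha}(2p+1)-1}$ into the $T_{2^k}$-chain times a $W_p$ factor for (\ref{HNIMPAIRTWO}). The only cosmetic difference is that you run the induction for (\ref{HNIMPAIRTWO}) through the recursion (\ref{HNIMPAIRTWOPROD}) by factoring $U_d(X_n)$ downward, whereas the paper proves the equivalent denominator identity $\mathcal{K}_n=U_{\frac{(d+1)^n}{2}-1}$ by building the same factorization upward.
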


\begin{proof} Equation (\ref{OLDNEWpair}) appears naturally when deriving the Householder's method when $d$ is even, as we have seen for  the Halley's method $(d=2)$ in (\ref{hallEXPL}). Using their trigonometric definitions, it is easy to verify that $W_{\textstyle \frac{(d+1)^n-1}{2}}(X)=\prod_{j=1}^n W_p\left(T_{(2p+1)^{j-1}}\left(X\right)\right)$ and $V_{\textstyle \frac{(d+1)^n-1}{2}}(X)=\prod_{j=1}^n V_p\left(T_{(2p+1)^{j-1}}\left(X\right)\right)$. Therefore, Equations (\ref{HNPAIR}) and 
(\ref{OLDNEWpair}) are identical. Equation (\ref{residPAIR}) can be directly obtained from (\ref{OLDNEWpair}) using the identity $V_p-W_p=-2U_{p-1}$ \cite{Mason20021}.  Equation (\ref{HNIMPAIRTWOPROD}) can be obtained from (\ref{HNIMPAIR}) using the two classical identities $T_n \circ T_m=T_{n+m}$ and $U_{nm-1}=U_{m-1}(T_n)U_{n-1}$ \cite{Mason20021}. Equation (\ref{HNIMPAIRTWOPRODDELTA}) is a direct consequence of (\ref{HNIMPAIRTWOPROD}) using the identity $T_{d+1}(X)=XU_d(X)-U_{d-1}(X)$ \cite{Mason20021}. Equation (\ref{HNIMPAIRTWO}) appears relatively naturally when deriving the Householder's method from its definition (\ref{defHouse2}) when $d$ is odd, as for the Newton's method in (\ref{denom}). It consists of a factorization - not prime - of the Chebyshev polynomials of the second kind at the denominator. Let us introduce the  following family of polynomials $\mathcal{K}_n(X)= \displaystyle 2^{\alpha n-1}\left(\prod_{j=0}^{n-2}T_{\frac{(d+1)^{j+1}}{2}}\left(X\right) \displaystyle \prod_{\substack{j=0..n-1 \\ k=0..\alpha-2}} T_{2^k(d+1)^{j}}\left(X\right)\displaystyle \prod_{j=0}^{n-1} W_p\left(T_{2^\alpha(d+1)^{j}}\left(X\right)\right)\right)$. In order to justify (\ref{HNIMPAIRTWO}), it is sufficient to prove by induction that $\mathcal{K}_n(X)=U_{\frac{(d+1)^n}{2}-1}(X)$. For $n=1$, $\mathcal{K}_1$ reduces to $ 2^{\alpha-1}\left(\prod_{{k=0..\alpha-2}} T_{2^k}\left(X\right) W_p\left(T_{2^\alpha}\left(X\right)\right)\right)$. Using the same factorization as in (\ref{denom22}), $\mathcal{K}_1$ can be re-written as $U_{2^{\alpha-1}-1}W_p(T_{2^\alpha}(X))$, which is equal to $U_{2^\alpha p+2^{\alpha-1}-1}(X)$ using the respective definitions of $T_{2^\alpha}$, $W_p$ and $U_{2^{\alpha-1}-1}$ in terms of trigonometric functions. As $\frac{d+1}{2}-1$ is equal to $2^\alpha p+2^{\alpha-1}-1$, the base case is proven. At a step $n \geq 1$, it is assumed that $\mathcal{K}_n(X)=U_{\frac{(d+1)^n}{2}-1}(X)$. $\mathcal{K}_{n+1}(X)$ can be written as follows:
$$\mathcal{K}_{n+1}(X)=2^\alpha \prod_{k=0}^{\alpha-1}T_{2^{k-1}(d+1)^n}(X) \times  W_p(T_{2^\alpha (d+1)^n}(X)) \times U_{\frac{(d+1)^n}{2}-1}(X)$$
Using the identity $U_{2n-1}=2T_n U_{n-1}$ in a chain rule, we obtain \\ $\mathcal{K}_{n+1}(X)=U_{2^{\alpha-1}(d+1)^n-1}W_p(T_{2^\alpha (d+1)^n}(X))$. As for the base case, this product reduces to the Chebyshev polynomial of the second kind $U_{2^\alpha(d+1)^n p+2^{\alpha-1}(d+1)^n-1}(X)$. The previous polynomial is $U_{\frac{(d+1)^{n+1}}{2}-1}(X)$, which ends the proof.
\end{proof}

An algorithm to compute $\mathcal{H}_{n}$ can be derived using the properties of Corollary \ref{th:Corodelta}. It is implemented in Algorithm 4 with input $x$, the initialization $r$, the order $d$  and the positive index $n$ and with output $S$. For an efficient numerical implementation of Algorithm 4, it can be recommended to have a precise asymptotic expansion of either $\frac{2U_{\frac{d}{2}-1}(X)}{V_{\frac{d}{2}}(X)}$ or $\frac{-U_{d-1}(X)}{X U_{d}(X)}$.


\begin{algorithm}[!ht]
\label{algohall}
\caption{$\;\;\;S=H(x,r,d,n)$}
\begin{algorithmic}[1]
\item $X \gets \frac{x+r^2}{x-r^2}$
\If{$(mod(d,2)==0)$}

        \State  $S \gets r(1+\Call{Phi}{X})$ 
				\State $T \gets T_{d+1}(X)$
    
\Else
      \State $T \gets T_{\frac{d+1}{2}}(X)$ 
				\State  $S \gets \frac{rT}{(X-1) \times U_{\frac{d+1}{2}-1}(X)}$

\EndIf
\For{i=2:n}
\State $S \gets S(1+\Call{Phi}{T})$  \phantom{space}/*$H_i$*/
\State  $T \gets T_{d+1}(T)$     
\EndFor
\State ${\bf return} \displaystyle \;\; S$
\Function{Phi}{X}
\If{$(mod(d,2)==0)$}
        \State ${\bf return} \displaystyle  \frac{2U_{\frac{d}{2}-1}(X)}{V_{\frac{d}{2}}(X)}$
    
\Else
     \State ${\bf return} \displaystyle  \frac{-U_{d-1}(X)}{X U_{d}(X)}$
\EndIf
\EndFunction
\end{algorithmic}
\end{algorithm}

Finally, it is possible to express $\mathcal{H}_{n}$ without the use of trigonometric-related functions as expressed in Theorem \ref{th:sqrtxR}. A proof follows.

\begin{theorem}\label{th:sqrtxR}
Considering the Householder's method of order $d$ for $\sqrt{x}$ with a  starting point $r$, the corresponding sequence $\{{\mathcal{H}_{n}}\}_{n \geq 0}$  can be expressed  as follows:

\begin{equation} 
\mathcal{H}_{n}= \displaystyle  \sqrt{x} \frac{\displaystyle  \left(r +\sqrt{x}\right)^{(d+1)^n}+\left(r -\sqrt{x}\right)^{(d+1)^n}}{\displaystyle  \left(r +\sqrt{x}\right)^{(d+1)^n}-\left(r -\sqrt{x}\right)^{(d+1)^n}}
\label{sqrt1}
\end{equation}

The previous expression can be re-written as a rational function of $(x,r)$ as follows:

\begin{equation}
\mathcal{H}_{n} = r\frac{\displaystyle  \sum_{k=0}^{\myceil{\frac{(d+1)^n-1}{2}}} {{(d+1)^n}\choose{2k}} x^kr^{(d+1)^n -2k} }{\displaystyle  \sum_{k=0}^{\myfloor{\frac{(d+1)^n-1}{2}}} {{(d+1)^n}\choose{1+2k}} x^kr^{(d+1)^n-2k} }
\label{sqrt2}
\end{equation}

\end{theorem}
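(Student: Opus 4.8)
The plan is to establish the closed form (\ref{sqrt1}) first, by induction on $n$, and then to read off the rational expression (\ref{sqrt2}) by a direct binomial expansion. The engine for the induction is the intermediate identity already produced in the proof of Theorem \ref{th:Houseun}, namely the second equality in the chain (\ref{findefh}),
$$\mathcal{H}_{n+1} = \sqrt{x}\,\frac{(t+1)^{d+1}+(t-1)^{d+1}}{(t+1)^{d+1}-(t-1)^{d+1}}\bigg|_{t=\mathcal{H}_n x^{-1/2}},$$
which is valid uniformly in $d$, whether even or odd, so that no case split is needed. The key observation I would exploit is that this map linearizes under the Cayley-type substitution $\phi(y)=\frac{y-\sqrt{x}}{y+\sqrt{x}}$: the Householder step becomes the pure power map $c\mapsto c^{d+1}$, which is precisely why the exponent $(d+1)^n$ appears.

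First I would record the base case $n=0$: since $(d+1)^0=1$, the right-hand side of (\ref{sqrt1}) collapses to $\sqrt{x}\,\frac{(r+\sqrt{x})+(r-\sqrt{x})}{(r+\sqrt{x})-(r-\sqrt{x})}=\sqrt{x}\,\frac{2r}{2\sqrt{x}}=r=\mathcal{H}_0$. For the induction step, I set $t=\mathcal{H}_n x^{-1/2}$ in the displayed recurrence and write $P=(t+1)^{d+1}+(t-1)^{d+1}$ and $Q=(t+1)^{d+1}-(t-1)^{d+1}$, so that $\mathcal{H}_{n+1}=\sqrt{x}\,P/Q$. Computing $\mathcal{H}_{n+1}\mp\sqrt{x}=\sqrt{x}\,(P\mp Q)/Q$ and using $P-Q=2(t-1)^{d+1}$, $P+Q=2(t+1)^{d+1}$ gives at once
$$\phi(\mathcal{H}_{n+1})=\left(\frac{t-1}{t+1}\right)^{d+1}=\left(\frac{\mathcal{H}_n-\sqrt{x}}{\mathcal{H}_n+\sqrt{x}}\right)^{d+1}=\phi(\mathcal{H}_n)^{d+1}.$$
Iterating from $\phi(\mathcal{H}_0)=\frac{r-\sqrt{x}}{r+\sqrt{x}}$ yields $\phi(\mathcal{H}_n)=\left(\frac{r-\sqrt{x}}{r+\sqrt{x}}\right)^{(d+1)^n}$, and inverting $\phi$ through $y=\sqrt{x}\,\frac{1+\phi(y)}{1-\phi(y)}$ reproduces exactly (\ref{sqrt1}). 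I would append a one-line well-definedness remark: since $x>0$ and $r>0$ we have $|r+\sqrt{x}|>|r-\sqrt{x}|$, so the denominator $(r+\sqrt{x})^{(d+1)^n}-(r-\sqrt{x})^{(d+1)^n}$ never vanishes.

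For the passage to (\ref{sqrt2}) I would expand $(r+\sqrt{x})^{N}\pm(r-\sqrt{x})^{N}$ with $N=(d+1)^n$ by the binomial theorem, exactly as in the parity argument behind Lemma \ref{lm:Houseun}, equation (\ref{fdiff}): the sum retains only the even-index terms, $2\sum_k\binom{N}{2k}r^{N-2k}x^{k}$, and the difference only the odd-index terms, $2\sqrt{x}\sum_k\binom{N}{2k+1}r^{N-1-2k}x^{k}$. The two factors of $2$ cancel, one $\sqrt{x}$ cancels against the prefactor, and absorbing a single power of $r$ into the denominator to align its exponent to $r^{N-2k}$ produces the stated form with the overall factor $r$ in front. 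The only bookkeeping to verify is that the summation ranges match: the even-index terms run over $0\le k\le\lfloor N/2\rfloor=\myceil{\frac{N-1}{2}}$ and the odd-index terms over $0\le k\le\myfloor{\frac{N-1}{2}}$, which are precisely the upper limits in (\ref{sqrt2}) in both parities of $N$.

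I expect no serious obstacle here: the whole argument hinges on recognizing the conjugation $\phi$, after which both claims reduce to a few lines. The only place demanding genuine care is the final bookkeeping for (\ref{sqrt2}) — tracking the single leftover power of $r$ and checking the floor/ceiling bounds — but this is routine rather than difficult.
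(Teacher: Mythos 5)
Your proof is correct and follows essentially the same route as the paper: both arguments run an induction on $n$ powered by the identity $\mathcal{H}_{n+1}=\sqrt{x}\,\frac{(t+1)^{d+1}+(t-1)^{d+1}}{(t+1)^{d+1}-(t-1)^{d+1}}\big|_{t=\mathcal{H}_n x^{-1/2}}$ from (\ref{findefh}), and your Cayley-transform linearization $\phi(\mathcal{H}_{n+1})=\phi(\mathcal{H}_n)^{d+1}$ is just a cleaner packaging of the paper's direct manipulation of $\phi_n=(r+\sqrt{x})^{(d+1)^n}$ and $\psi_n=(r-\sqrt{x})^{(d+1)^n}$ in (\ref{sqrtproof}). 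The only cosmetic divergence is at the end: you derive (\ref{sqrt2}) by an explicit binomial expansion (with correct floor/ceiling bookkeeping), whereas the paper obtains it by reading (\ref{sqrt1}) as the first iterate of the Householder method of order $(d+1)^n-1$ and quoting (\ref{defHouse2}) — the same computation in different clothing.
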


\begin{proof}
We prove by induction that Equations (\ref{defHouse2}) and  (\ref{sqrt1}) are identical. For the initialization, both equations are equal to $r$. Let us assume the equality holds at step $n \geq 0$. Let us denote $\phi_n=\left(r +\sqrt{x}\right)^{(d+1)^n}$  and $\psi_n=\left(r-\sqrt{x}\right)^{(d+1)^n}$. Therefore $\mathcal{H}_{n}=\sqrt{x}\frac{\phi_n+\psi_n}{\phi_n-\psi_n}$. The induction step can be obtained using the same method as in (\ref{findefh}):

\begin{equation}
\begin{array}{llllll}
\displaystyle  \sqrt{x} \frac{\displaystyle  \left(r +\sqrt{x}\right)^{(d+1)^{n+1}}+\left(r -\sqrt{x}\right)^{(d+1)^{n+1}}}{\displaystyle  \left(r +\sqrt{x}\right)^{(d+1)^{n+1}}-\left(r -\sqrt{x}\right)^{(d+1)^{n+1}}} &=& \sqrt{x} \frac{\displaystyle \phi_n^{d+1}+\psi_n^{d+1}}{\displaystyle \phi_n^{d+1}-\psi_n^{d+1}} \\
&=& 
\sqrt{x} \frac{\displaystyle \left(\phi_n +\psi_n +\phi_n-\psi_n \right)^{d+1}+\left(\phi_n +\psi_n -(\phi_n-\psi_n) \right)^{d+1}}{\displaystyle \left(\phi_n +\psi_n +\phi_n-\psi_n \right)^{d+1}-\left(\phi_n +\psi_n -(\phi_n-\psi_n) \right)^{d+1}} \\
&=&  \sqrt{x}\frac{\displaystyle \left(\left(\frac{\phi_n+\psi_n}{\phi_n-\psi_n} \right)+1\right)^{d+1}+\left(\left(\frac{\phi_n+\psi_n}{\phi_n-\psi_n} \right)-1\right)^{d+1}}{\displaystyle\left(\left(\frac{\phi_n+\psi_n}{\phi_n-\psi_n} \right)+1\right)^{d+1}-\left(\left(\frac{\phi_n+\psi_n}{\phi_n-\psi_n} \right)-1\right)^{d+1}}  \\
&=&\left.  \sqrt{x}\frac{ \displaystyle (t+1)^{d+1}+(t-1)^{d+1}}{ \displaystyle (t+1)^{d+1}-(t-1)^{d+1}}    \right|_{t=\mathcal{H}_n x^{-1/2}} \\
&=&\mathcal{H}_{n+1}
\end{array}
\label{sqrtproof}
\end{equation}

 Equation  (\ref{sqrt1}) highlights that the $(n+1)$-th term of the Householder's sequence of order $d$ is equal to the second term of the Householder's sequence of order $(d+1)^n-1$. Equation (\ref{sqrt2}) is simply obtained from (\ref{defHouse2}) for $n=0$ and the order $(d+1)^n-1$. Similar functions to the ones in Equation (\ref{sqrt2}) have been recently studied in \cite{Lima} and are related to the tangent analog of the Chebyshev polynomials.

\end{proof}

We can notice that the sequence $\left\{\mathcal{A}_n\right\}_{n \geq 1}$ defined by $\mathcal{A}_n=\sqrt{x} \frac{\displaystyle  \left(r +\sqrt{x}\right)^{n}+\left(r -\sqrt{x}\right)^{n}}{\displaystyle  \left(r +\sqrt{x}\right)^{n}-\left(r -\sqrt{x}\right)^{n}}$ is among the slowest sequence to converge to $\sqrt{x}$ while at the same the time  it is featuring in its subsequences all the sequences of  the Householder's method  for $\sqrt{x}$ at every order.  This sequence has already been obtained by Yeyios \cite{YEYIOS} from continued fraction expansions. The Newton's method and more generally the Householder's method for square roots of integer numbers is intimately related to Pell's Equations \cite{BarbeauBOOKPell,McBride99}.

\section{A note on the Householder's method for nth roots }

Algorithms for the nth root computation have already been developed \cite{DUBEAU200966}.  An introduction to the Householder's method to obtain $\sqrt[p]{x}$  is now discussed. Considering an integer $p$, an order $d$ and the function $g_p(t)=\frac{1}{t^p-x}$ where $x$ is a positive real number, the Householder's method of order $d$ for pth root  is provided in (\ref{defHousep}) with initial guess $r$.
As for the method developed in Section 6, the sequence $\{\bold{H}_{n}\}_{n \geq 0}$ converges to $\sqrt[p]{x}$ with a rate of convergence of $d+1$.

\begin{equation}
\bold{H}_0=r \;\;,\;\; (\forall n \geq 0) \;\;\; \bold{H}_{n+1} = \bold{H}_n + d \frac{g_p^{(d-1)}(\bold{H}_{n})}{g_p^{(d)}(\bold{H}_{n})}
\label{defHousep}
\end{equation}

Numerous expressions in the Householder's method for square roots are based on binomial coefficients. Considering the nth root extraction, we need to introduce the generalized binomial coefficients of order $p$ ${{n}\choose{m}}_p$  \cite{Bollinger,Bondarenko} which naturally appear in the development of $B_p(x)=\left(\sum_{k=0}^{p-1}x^k\right)$ at the power $n$ as follows:

\begin{equation}
B_p(x)^n=\left(1+x+\ldots+x^{p-1}\right)^n =\sum_{m=0}^{(p-1)n} {{n}\choose{m}}_p x^m
\label{genbinom}
\end{equation}

The generalized binomial coefficients of order $p$ can be obtained from the binomial coefficients as follows \cite{Bondarenko}:
$${{n}\choose{m}}_p=\sum_{k=0}^{\myfloor{\frac{m}{p}}}(-1)^k{{n}\choose{k}}{{n+m-pk-1}\choose{n-1}}$$

In addition, the parity arguments are extended to their modular counterparts involving series multisection \cite{BalaichOndrus,Beauregard}. Given a function $f$ and a radical $p$, let us denote the primitive root of unity $\xi=exp(\frac{2i \pi}{p})$ and we introduce $p$ functions $\{\Big[ f\Big]_\ell (t)\}_{\ell=0..p-1}$ as follows:

\begin{equation}
\Big[ f\Big]_\ell (t) =\frac{1}{p}\sum_{k=0}^{p-1} \xi^{-\ell k}f(\xi^k t)
\label{fkmodular}
\end{equation}

It is straightforward to verify that if $f$  has  a power expansion of the type $\sum_{n \geq 0} a_n x^n$, $\Big[ f\Big]_\ell$ is expressed as $\sum_{n \geq 0} a_{pn+\ell} x^{pn+\ell}$. The functions  are commonly referred as Roots of Unity Filters and $f$ can be reconstructed from their sums.

An explicit expression of the sequence $\{\bold{H}_{n}\}_{n \geq 0}$ is presented in Theorem \ref{th:Houseunpp}. A proof follows. 

\begin{theorem}\label{th:Houseunpp}

Considering the Householder's method of order $d$ for $\sqrt[p]{x}$ with a  starting point $r$, the corresponding sequence $\{{\bold{H}_{n}}\}_{n \geq 0}$  can be expressed as follows:
\begin{equation}
\bold{H}_0=r \;\;,\;\; (\forall n \geq 0) \;\;\; \bold{H}_{n+1} = \sqrt[p]{x} \frac{\Big[B_p^{d+1}\Big]_{1-d[p]}\left(\bold{H}_{n}x^{-1/p}\right)}{\Big[B_p^{d+1}\Big]_{-d[p]}\left(\bold{H}_{n}x^{-1/p}\right)}
\label{defHouse2pp}
\end{equation}

The previous expression can be formulated as follows:

\begin{equation}
\bold{H}_0=r \;\;,\;\; (\forall n \geq 0) \;\;\; \bold{H}_{n+1} = \frac{\displaystyle \sum_{k=0}^{\myfloor{\frac{(p-1)d+1}{p}}} {{d+1}\choose{p(k+1)-2}}_p \bold{H}_{n}^{d(p-1)+1-pk}x^{k}}{\displaystyle \sum_{k=0}^{\myfloor{\frac{(p-1)d}{p}}} {{d+1}\choose{p(k+1)-1}}_p \bold{H}_{n}^{d(p-1)-pk}x^k}
\label{defHouse2pp2}
\end{equation}
\end{theorem}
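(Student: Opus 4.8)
The plan is to follow the blueprint of Lemma~\ref{lm:Houseun}, replacing the quadratic template $1/(t^2-1)$ by $G_p(t)=1/(t^p-1)$ and writing $\xi=\exp(2i\pi/p)$ for the primitive $p$-th root of unity. The first move is the scaling reduction: since $g_p(t)=\tfrac1x\,G_p\!\left(tx^{-1/p}\right)$, the chain rule gives $g_p^{(k)}(t)=x^{-1-k/p}G_p^{(k)}(tx^{-1/p})$, so that $g_p^{(d-1)}/g_p^{(d)}=x^{1/p}\,G_p^{(d-1)}/G_p^{(d)}$ and the iteration (\ref{defHousep}) collapses to
$$\bold{H}_{n+1}=\sqrt[p]{x}\left(t+d\,\frac{G_p^{(d-1)}(t)}{G_p^{(d)}(t)}\right),\qquad t=\bold{H}_n\,x^{-1/p}.$$
Everything then reduces to understanding the single ratio $t+d\,G_p^{(d-1)}(t)/G_p^{(d)}(t)$, exactly as the whole square-root case reduced to one ratio in (\ref{findefh}).

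Next I would compute the derivatives from the partial-fraction expansion $G_p(t)=\tfrac1p\sum_{j=0}^{p-1}\xi^{j}/(t-\xi^{j})$, which yields $G_p^{(m)}(t)=\tfrac{(-1)^m m!}{p}\sum_{j}\xi^{j}/(t-\xi^{j})^{m+1}$. Setting $\Sigma_a(t)=\sum_{j=0}^{p-1}\xi^{aj}/(t-\xi^{j})^{d+1}$, the factorials and signs cancel in the ratio, and a one-line telescoping based on $t/(t-\xi^{j})^{d+1}-1/(t-\xi^{j})^{d}=\xi^{j}/(t-\xi^{j})^{d+1}$ gives
$$t+d\,\frac{G_p^{(d-1)}(t)}{G_p^{(d)}(t)}=\frac{\Sigma_2(t)}{\Sigma_1(t)}.$$
This is the direct analogue of the $(t+1)^{d+1}\pm(t-1)^{d+1}$ identity used for $p=2$.

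The heart of the argument is to recognise $\Sigma_a$ as a roots-of-unity filter of $B_p^{d+1}$. Clearing the common denominator $(t^p-1)^{d+1}=\prod_j(t-\xi^{j})^{d+1}$ and using the factorisation $\prod_{i\neq j}(t-\xi^{i})=\xi^{-j}B_p(\xi^{-j}t)$, one gets $\Sigma_a(t)=(t^p-1)^{-(d+1)}\sum_j\xi^{j(a-d-1)}B_p(\xi^{-j}t)^{d+1}$. Expanding $B_p^{d+1}$ with the generalized binomial coefficients of (\ref{genbinom}) and summing the geometric sum $\sum_j\xi^{j(a-d-1-m)}$, which equals $p$ when $m\equiv a-d-1\pmod p$ and $0$ otherwise, identifies — through the coefficient description of the filter in (\ref{fkmodular}) — the quantity $\Sigma_a(t)=\tfrac{p}{(t^p-1)^{d+1}}\,\big[B_p^{d+1}\big]_{(a-d-1)[p]}(t)$. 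The prefactor cancels in $\Sigma_2/\Sigma_1$, leaving exactly (\ref{defHouse2pp}) with numerator index $(1-d)[p]$ and denominator index $(-d)[p]$. I expect this filter identification, with its bookkeeping of residues modulo $p$, to be the main obstacle.

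Finally, to reach the closed form (\ref{defHouse2pp2}) I would substitute $t=\bold{H}_n x^{-1/p}$ into (\ref{defHouse2pp}): each monomial $t^m$ contributes $\bold{H}_n^{m}x^{-m/p}$, and a short check shows that the fractional powers of $x$ in numerator and denominator agree up to the single common factor $x^{-d(p-1)/p}$, which cancels and leaves genuine integer powers $x^{k}$. To make the indexing uniform in $d$, I would invoke the palindromic symmetry $\binom{d+1}{m}_p=\binom{d+1}{(p-1)(d+1)-m}_p$ of the coefficients of the self-reciprocal polynomial $B_p^{d+1}$: the reflection $m\mapsto(p-1)(d+1)-m$ sends the residue class $1-d$ to $-2$ and $-d$ to $-1$ modulo $p$ — the $d$-dependence cancels — turning the two filters into $\sum_k\binom{d+1}{p(k+1)-2}_p\bold{H}_n^{d(p-1)+1-pk}x^{k}$ and $\sum_k\binom{d+1}{p(k+1)-1}_p\bold{H}_n^{d(p-1)-pk}x^{k}$. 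The summation ranges in (\ref{defHouse2pp2}) are then read off from the requirement that the exponent of $\bold{H}_n$ remain non-negative, giving $k\le\myfloor{\frac{(p-1)d+1}{p}}$ and $k\le\myfloor{\frac{(p-1)d}{p}}$ respectively, which completes the identification.
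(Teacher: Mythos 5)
Your proposal is correct and follows essentially the same route as the paper: the scaling reduction to $G_p(t)=1/(t^p-1)$, the partial-fraction derivative formula, the telescoping identity producing $\Sigma_2/\Sigma_1$, the roots-of-unity-filter identification of $\Sigma_a$ with $\big[B_p^{d+1}\big]_{(a-d-1)[p]}/(t^p-1)^{d+1}$, and the final use of the symmetry ${n\choose m}_s={n\choose (s-1)n-m}_s$ to reach (\ref{defHouse2pp2}). The only difference is cosmetic: you clear denominators in $\Sigma_a$ and sum geometric series, whereas the paper expands the filter from its definition (\ref{fkmodular}) — the same computation read in the opposite direction.
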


\begin{proof} \label{proof:Housepp1}

The partial fraction decomposition of  $\frac{1}{t^p-1}$ is an elementary result:
$$ \frac{1}{t^p-1}=\frac{1}{p}\sum_{k=0}^{p-1} \frac{\xi^k}{t-\xi^k}$$
Therefore its $r$-th derivative has the following expression:
$$\frac{\mathrm{d}^r \frac{1}{t^p-1}}{\mathrm{d}t^r}=\frac{(-1)^r r!}{p}\sum_{k=0}^{p-1} \frac{\xi^k}{(t-\xi^k)^{r+1}}$$

By noticing that $g_p(t)=\frac{1}{x}\frac{1}{(tx^{-1/p})^p-1}$, Expression (\ref{defHousep}) can be written as follows:

$$\bold{H}_{n+1} = \sqrt[p]{x} \left. \left[ t-  \frac{\sum_{k=0}^{p-1} \frac{\xi^k}{(t-\xi^k)^{d}}}{\sum_{k=0}^{p-1} \frac{\xi^k}{(t-\xi^k)^{d+1}}} \right]\right|_{t=\bold{H}_{n}x^{-1/p}} $$

We can now notice that $\frac{t\xi^k}{(t-\xi^k)^{d+1}}=\frac{\xi^k}{(t-\xi^k)^{d}}+\frac{\xi^{2k}}{(t-\xi^k)^{d+1}}$, which implies:

\begin{equation}
\bold{H}_{n+1} = \sqrt[p]{x} \left. \left[\frac{ \displaystyle \sum_{k=0}^{p-1} \frac{\xi^{2k}}{(t-\xi^k)^{d+1}}}{ \displaystyle \sum_{k=0}^{p-1} \frac{\xi^k}{(t-\xi^k)^{d+1}}} \right]\right|_{t=\bold{H}_{n}x^{-1/p}}
\label{BoldHn}
\end{equation}

Let us now derive the function $\frac{\Big[B_p^{d+1}\Big]_{l}(t)}{(t^p-1)^{d+1}}$ using the identity $t^{p}-1=(t-1)B(t)$ and the series multisection defined in (\ref{fkmodular}) as follows :

\begin{equation}
\begin{array}{llllll}
\frac{\Big[B_p^{d+1}\Big]_{\ell}(t)}{(t^p-1)^{d+1}} &=& \frac{1}{p} \frac{ \displaystyle \sum_{k=0}^{p-1} \xi^{-\ell k}B(\xi^k t)^{d+1}}{  \displaystyle (t^p-1)^{d+1}} = \frac{1}{p} \displaystyle \sum_{k=0}^{p-1} \frac{\xi^{-\ell k}}{(\xi^kt-1)^{d+1}} \\
&=& \frac{1}{p} \displaystyle \sum_{k=0}^{p-1} \frac{\xi^{-\ell k}}{\xi^{k(d+1)}(t-\xi^{p-k})^{d+1}} \\
&=& \frac{1}{p} \displaystyle \sum_{k=0}^{p-1} \frac{\xi^{k(\ell+d+1)}}{(t-\xi^{k})^{d+1}} \\
\end{array}
\label{BLsurRP}
\end{equation}

The previous function can match both the numerator and the denominator of (\ref{BoldHn}). For the numerator, we need to identify $\ell$ such that $\ell+d+1=2[p]$ and for the denominator  $\ell$ such that $\ell+d+1=1[p]$. Equation (\ref{defHouse2pp}) ensues. To obtain (\ref{defHouse2pp2}), rather than using the explicit expression of $\Big[B_p^{d+1}\Big]_{\ell}(t)$ using (\ref{fkmodular}), we use the fact that it filters all powers except the ones congruent to $\ell$ modulo $p$. The highest degree of the numerator is $d(p-1)+1$ while the highest degree of the denominator is  $d(p-1)$. Therefore, we obtain:

\begin{equation}
\begin{array}{llllll}
\bold{H}_{n+1} &=& \sqrt[p]{x} \frac{\displaystyle \Big[B_p^{d+1}\Big]_{1-d[p]}\left(\bold{H}_{n}x^{-1/p}\right)}{\displaystyle \Big[B_p^{d+1}\Big]_{-d[p]}\left(\bold{H}_{n}x^{-1/p}\right)} = \left. \sqrt[p]{x}  \frac{\displaystyle \sum_{\stackrel{m=0}{m+d-1=0[p]}}^{(p-1)(d+1)} {{d+1}\choose{m}}_p t^m}{\displaystyle \sum_{\stackrel{m=0}{m+d=0[p]}}^{(p-1)(d+1)} {{d+1}\choose{m}}_p t^m} \right|_{t=\bold{H}_{n}x^{-1/p}} \\
&=& \left. \sqrt[p]{x}  \frac{\displaystyle \sum_{k=0}^{\myfloor{\frac{(p-1)d+1}{p}}} {{d+1}\choose{d(p-1)+1-pk}}_p t^{d(p-1)+1-pk}}{\displaystyle \sum_{k=0}^{\myfloor{\frac{(p-1)d}{p}}} {{d+1}\choose{d(p-1)-pk}}_p t^{d(p-1)-pk}} \right|_{t=\bold{H}_{n}x^{-1/p}}\\
&=& \sqrt[p]{x}  \frac{\displaystyle \sum_{k=0}^{\myfloor{\frac{(p-1)d+1}{p}}} {{d+1}\choose{d(p-1)+1-pk}}_p \bold{H}_{n}^{d(p-1)+1-pk}x^{k-1/p}}{\displaystyle \sum_{k=0}^{\myfloor{\frac{(p-1)d}{p}}} {{d+1}\choose{d(p-1)-pk}}_p \bold{H}_{n}^{d(p-1)-pk}x^k}\\
\end{array}
\label{HNppP1}
\end{equation}

The radical values collapses and we obtain (\ref{defHouse2pp2}) using the identity ${{n}\choose{m}}_s={{n}\choose{(s-1)n-m}}_s$  \cite{Bondarenko}, which ends the proof.

\end{proof}

\section{A note on the asymptotic expansion of Chebyshev functions \label{sec:extension}}

When considering the Householder's method of order $d$ for $\sqrt{x}$, Algorithm 4 identified the need to have a precise asymptotic evaluation 
 of $\frac{U_{d-1}(X)}{X U_{d}(X)}$ and $\frac{2U_{\frac{d}{2}-1}(X)}{V_{\frac{d}{2}}(X)}$ when $d$ is odd and even, respectively. The goal of this section is to provide an asymptotic expansion of these functions and to discuss the presence of these families of functions in the study of lattice paths.

Let us first denote $f_d(x)=\frac{U_{d-1}(x)}{x U_{d}(x)}$ for  $d \geq 1$. It is straightforward from (\ref{defUn}) to obtain the following recurrence relation:

\begin{equation}
f_1(x)=\frac{1}{2x^2} \;\;\;,\;\;\;\; f_d(x)=\frac{1}{x^2(2-f_{d-1}(x))}
\label{recfd}
\end{equation}

From (\ref{recfd}), we can in particular explicit the pointwise limit of $f_d(x)$ for $x>1$ which is $f(x)=1-\sqrt{1-\frac{1}{x^2}}$. 
Before expressing $f_d$ as a power series, we define one of the most common lattice path called Dyck path \cite{Krattenthaler}. A Dyck path of semilength $n$ and of maximum height $h$ is a lattice walk from $(0,0)$ to $(2n,0)$ with steps of the form $(1,1)$ and $(1,-1)$ with a height bounded in the interval $[0,h]$. The number of Dyck paths of semilength $n$ and of maximum height $h$ is denoted $\Delta_{n,h}$. We can observe that $\Delta_{4,1}=1$, $\Delta_{4,2}=8$, $\Delta_{4,3}=13$ and $\Delta_{4,k}=14$ for $k\geq 4$. The natural initialization of the sequence is $\{\Delta_{0,h}=1\}_{h \geq 0}$ and $\{\Delta_{n,0}=0\}_{n \geq 1}$. Enumerating Dyck paths can be also found in ballot counting problems \cite{Bertrand}, plane trees \cite{Bruijn} or permutations \cite{Krattenthaler}. It is well established that $\Delta_{n,h}$ is equal to the Catalan number $C_n=\frac{{{2n}\choose{n}} }{n+1}$ when $h\geq n$ \cite{Aigner2001}.

An early reference to the sequence $\left\{\Delta_{n,h}\right\}_{\{n,h \geq 0\}}$ can be traced back in the work of Kreweras \cite{Kreweras} using Fibonacci polynomials, which are closely related to the Chebyshev polynomials of the second kind. The relationship between Dyck paths and the ratio of Chebyshev polynomials has been further established in  \cite{CHOW1999119,GuibertMansour,Krattenthaler}, usually involving generating functions. Theorem \ref{th:Dyck1} presents an asymptotic expansion of $f_d$ expressed as a power series, followed by a proof.

\begin{theorem}\label{th:Dyck1}

We consider the family of functions $\{f_d(x)=\frac{U_{d-1}(x)}{x U_{d}(x)}\}_{d \geq 1}$ over the interval $[1,\infty[$ and $\Delta_{n,h}$ as the number of Dyck paths of semilength $n$ and of maximum height $h$. $f_d$ can be expressed using power series as follows:

\begin{equation}
f_d(x)=\sum_{i=0}^\infty \frac{\Delta_{i,d-1}}{2^{2i+1}x^{2i+2}}
\label{fdth}
\end{equation}

\end{theorem}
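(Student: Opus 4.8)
The plan is to interpret the claimed identity combinatorially through the bounded-height generating function of Dyck paths, and then to verify that this generating function obeys exactly the recurrence (\ref{recfd}) after a change of variable. Write $D_h(z)=\sum_{n\geq 0}\Delta_{n,h}z^n$ for the generating function counting Dyck paths of semilength $n$ and maximal height at most $h$. The first step is to establish the elementary recurrence $D_0(z)=1$ and $D_h(z)=\frac{1}{1-zD_{h-1}(z)}$ for $h\geq 1$. This follows from the first-return decomposition: a nonempty Dyck path of height at most $h$ factors uniquely as $U\,P\,D\,Q$, where $P$ is a Dyck path of height at most $h-1$ (the portion inside the first arch, read relative to its raised base) and $Q$ is a Dyck path of height at most $h$, which translates into the functional equation $D_h=1+zD_{h-1}D_h$. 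The initialization $\{\Delta_{0,h}=1\}$ together with $\{\Delta_{n,0}=0\}_{n\geq 1}$ gives $D_0\equiv 1$.

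The second step is to recast (\ref{fdth}) into a form matching this recurrence. Setting $z=\frac{1}{4x^2}$, the right-hand side of (\ref{fdth}) becomes
\begin{equation}
\sum_{i=0}^\infty\frac{\Delta_{i,d-1}}{2^{2i+1}x^{2i+2}}=\frac{1}{2x^2}\sum_{i=0}^\infty\Delta_{i,d-1}\Big(\frac{1}{4x^2}\Big)^i=\frac{1}{2x^2}\,D_{d-1}\!\Big(\frac{1}{4x^2}\Big),
\end{equation}
so it suffices to prove the equivalent identity $f_d(x)=\frac{1}{2x^2}D_{d-1}(\frac{1}{4x^2})$ for all $d\geq 1$.

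The third step is an induction on $d$ based on (\ref{recfd}), showing that $g_d(x):=\frac{1}{2x^2}D_{d-1}(\frac{1}{4x^2})$ satisfies the same recurrence as $f_d$. The base case $d=1$ is immediate, since $D_0\equiv 1$ yields $g_1(x)=\frac{1}{2x^2}=f_1(x)$. For the inductive step, I would use the induction hypothesis in the form $D_{d-2}(\frac{1}{4x^2})=2x^2 f_{d-1}(x)$, which gives $\frac{1}{4x^2}D_{d-2}(\frac{1}{4x^2})=\frac{1}{2}f_{d-1}(x)$; substituting into the $D_h$ recurrence produces
\begin{equation}
g_d(x)=\frac{1}{2x^2}\cdot\frac{1}{1-\tfrac{1}{2}f_{d-1}(x)}=\frac{1}{x^2\big(2-f_{d-1}(x)\big)},
\end{equation}
which is precisely (\ref{recfd}). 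Hence $g_d=f_d$ for every $d\geq 1$.

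Finally, I would confirm that the series genuinely represents $f_d$ on $[1,\infty[$. The poles of $f_d(x)=\frac{U_{d-1}(x)}{xU_d(x)}$ are the zeros of $xU_d(x)$, the largest of which is $\cos(\frac{\pi}{d+1})<1$; equivalently $D_{d-1}$ has radius of convergence $\frac{1}{4\cos^2(\pi/(d+1))}>\frac14$, while $\frac{1}{4x^2}\leq\frac14$ for $x\geq 1$, so the expansion is valid throughout the interval. I expect the only genuine work to be the clean statement and proof of the first-return recurrence for $D_h$, along with the attendant height bookkeeping (the shift $h=d-1$); once that is in place, the substitution $z=\frac{1}{4x^2}$ aligns it termwise with (\ref{recfd}) and the induction becomes purely mechanical.
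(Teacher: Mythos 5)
Your proof is correct, and at its core it is the same argument as the paper's: both reduce the claim to showing that the proposed series satisfies the recurrence (\ref{recfd}) by induction on $d$, with the combinatorial input being the arch/return decomposition of bounded-height Dyck paths (your first-return factorization $D_h=1+zD_{h-1}D_h$ is exactly the generating-function form of the paper's last-return identity (\ref{deltadecomp})). The difference is one of packaging: the paper expands $f_d$ with undetermined coefficients $\alpha_{i,d}$, turns $2x^2f_d-x^2f_df_{d-1}=1$ into a coefficient recurrence (\ref{recalpha}) via the Cauchy product, and checks that $\Delta_{i,d-1}/2^{2i+1}$ fits, whereas you keep everything at the level of the generating function $D_h(z)$ and a single substitution $z=\tfrac{1}{4x^2}$, which eliminates the index bookkeeping entirely. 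Your version also supplies something the paper leaves implicit: the justification that the series actually converges to $f_d$ on all of $[1,\infty[$, via the location of the zeros of $U_d$ and the resulting radius of convergence $\tfrac{1}{4\cos^2(\pi/(d+1))}>\tfrac14$ of $D_{d-1}$. That analytic remark is a genuine (if small) improvement in rigor; otherwise the two proofs are interchangeable.
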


\begin{proof}

Equation (\ref{fdth}) is correct for $d=1$, based on (\ref{recfd}). Considering Dyck paths of semilength $n$ and maximum height $h$, a conditioning with respect to the last return to the x-axis leads to \cite{Bruijn}:
\begin{equation}
\Delta_{n,h}=\sum_{k=0}^{n-1} \Delta_{k,h}\Delta_{n-1-k,h-1}
\label{deltadecomp}
\end{equation}
For $d\geq 2$, given the parity  of $f_d$ highlighted in (\ref{recfd}), $f_d$ can be expressed as $\sum_{i=0}^\infty \frac{\alpha_{i,d}}{x^{2i+2}}$. Rewritting (\ref{recfd}) as $2x^2f_d(x)-x^2f_d(x)f_{d-1}(x)=1$ and using the Cauchy product, it leads to:

\begin{equation} \left\{    \begin{array}{lllllllllll} 2\alpha_{0,d}&=&1 \\      2\alpha_{i,d}&=&\sum_{k=0}^i \alpha_{k,d}\alpha_{i-k,d-1} \;\;\; (i\geq 1)\end{array}\right.
\label{recalpha}
\end{equation}
Finally, $\alpha_{i,d}=\frac{\Delta_{i,d-1}}{2^{2i+1}}$ is the appropriate candidate for both the initialization and the general case in (\ref{recalpha}), which ends the proof.

\end{proof}

The approximation of $f_d$ through the sequence $\left\{\Delta_{n,d-1}\right\}_{\{n \geq 0\}}$ can be obtained in multiple ways. The sequence $\left\{\Delta_{n,h}\right\}_{\{n,h \geq 0\}}$ corresponds to the OEIS sequence {A080934} \cite{OEIS} and the recurrence relation presented in (\ref{deltadecomp}) \cite{Bruijn} is a common identity to compute the sequence. Closed form expressions are presented in \cite{HEIN2022112711} and the sequence has also been studied through its differences in \cite{Kreweras}.

Let us now study the family of functions $g_d=\frac{2U_{\frac{d}{2}-1}(X)}{V_{\frac{d}{2}}(X)}$. As for $f_d$, it is interesting to study $g_d$
for odd and even values of $d$. When $d$ is odd, it is necessary to define  Chebyshev functions of half-integer order, which has been comonly studied over the interval [-1,1] \cite{Ricci}. As we evaluate $g_d$ over the interval $[1,\infty[$ in Algorithm 4, the Chebyshev functions of half integer order are proposed in Lemma \ref{lmPChe}
over the interval $[1,\infty[$. The proof is simply obtained from the hyperbolic definition of the Chebyshev polynomial over the interval $[1,\infty[$, see for example \cite{Mason20021}.

\begin{lemma}\label{lmPChe}
The Chebyshev polynomials of the four kinds can be defined at half-integer orders using the following identities for $p \geq 0$:
\begin{equation}
 U_{p-\frac{1}{2}}=\sqrt{\frac{1}{2(1+z)}} W_p(z)
\label{CHEUW}
\end{equation}
\begin{equation}
 T_{p+\frac{1}{2}}=\sqrt{\frac{1+z}{2}} V_p(z) 
\label{CHETV}
\end{equation}
\end{lemma}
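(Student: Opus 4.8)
The plan is to pass to the hyperbolic parametrization valid on $[1,\infty[$, where each of the four kinds of Chebyshev polynomials admits a closed hyperbolic form, and then read off the two stated identities from a single half-angle computation. First I would set $z=\cosh\theta$ with $\theta\geq 0$, which parametrizes $[1,\infty[$ bijectively. On this range the relevant representations are
\[
T_n(\cosh\theta)=\cosh(n\theta),\qquad U_n(\cosh\theta)=\frac{\sinh((n+1)\theta)}{\sinh\theta},
\]
\[
V_p(\cosh\theta)=\frac{\cosh((2p+1)\theta/2)}{\cosh(\theta/2)},\qquad W_p(\cosh\theta)=\frac{\sinh((2p+1)\theta/2)}{\sinh(\theta/2)},
\]
the first two being the standard hyperbolic forms and the last two the hyperbolic analogues of the trigonometric definitions of $V_p$ and $W_p$ recalled earlier (the recurrences in (\ref{TcheV}) and (\ref{TcheW}) are unchanged, and $\cosh$ satisfies the same addition laws as $\cos$). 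The half-integer orders are then defined by allowing a half-integer index in the $T$ and $U$ formulas, namely $T_{p+\frac12}(\cosh\theta)=\cosh((p+\tfrac12)\theta)$ and $U_{p-\frac12}(\cosh\theta)=\sinh((p+\tfrac12)\theta)/\sinh\theta$.

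The key computational ingredient is the single half-angle identity $1+\cosh\theta=2\cosh^2(\theta/2)$. Since $\theta\geq 0$ forces $\cosh(\theta/2)>0$, this removes all ambiguity in the square roots and yields $\sqrt{(1+z)/2}=\cosh(\theta/2)$ and $\sqrt{1/(2(1+z))}=1/(2\cosh(\theta/2))$. For (\ref{CHETV}) I would substitute into the right-hand side to obtain $\cosh(\theta/2)\,V_p(\cosh\theta)=\cosh((2p+1)\theta/2)=\cosh((p+\tfrac12)\theta)$, which is exactly $T_{p+\frac12}(\cosh\theta)$. For (\ref{CHEUW}) the right-hand side becomes $\frac{1}{2\cosh(\theta/2)}\,W_p(\cosh\theta)=\frac{\sinh((p+1/2)\theta)}{2\sinh(\theta/2)\cosh(\theta/2)}$, and the duplication formula $2\sinh(\theta/2)\cosh(\theta/2)=\sinh\theta$ collapses this to $\sinh((p+\tfrac12)\theta)/\sinh\theta=U_{p-\frac12}(\cosh\theta)$. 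As both sides agree for every $\theta\geq 0$ and $z=\cosh\theta$ sweeps out all of $[1,\infty[$, each identity holds throughout the interval.

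The only genuinely conceptual point — a matter of care rather than difficulty — is that $T_{p+\frac12}$ and $U_{p-\frac12}$ are not polynomials and must be understood as \emph{defined} by their hyperbolic closed forms; the lemma is precisely the assertion that this natural definition coincides with the algebraic expressions $\sqrt{(1+z)/2}\,V_p(z)$ and $\sqrt{1/(2(1+z))}\,W_p(z)$, the latter being honest functions on $[1,\infty[$ because the branch of each square root is pinned down by the positivity of $\cosh(\theta/2)$. No real obstacle arises beyond this bookkeeping, so the entire argument reduces to the direct half-angle verification sketched above.
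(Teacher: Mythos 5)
Your proof is correct and takes essentially the same route as the paper, whose entire argument is the single remark that the lemma ``is simply obtained from the hyperbolic definition of the Chebyshev polynomial over the interval $[1,\infty[$''; your substitution $z=\cosh\theta$ followed by the half-angle and duplication identities is exactly the computation that remark leaves implicit. Your added care about defining $T_{p+\frac{1}{2}}$ and $U_{p-\frac{1}{2}}$ by their hyperbolic closed forms and about the sign of the square roots is a welcome clarification, not a deviation.
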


Based on the identities of Lemma \ref{lmPChe},  the family of functions $g_d$ is defined as follows:

\begin{equation} g_d(x)=\left\{    \begin{array}{lllllllllll} \frac{2U_{\frac{d}{2}-1}(x)}{V_{\frac{d}{2}}(x)} & \text{if   $d$ is even } \\     \frac{W_{\frac{d-1}{2}}(x)}{T_{\frac{d+1}{2}}(x)} & \text{if   $d$ is odd }\end{array}\right.
\label{gddef}
\end{equation}

In order to express $g_d$ as a power series, Lemma \ref{lmGDFD} presents an alternate expression of $g_d$ along with a recurrence relation involving both $f_d$ and $g_d$. A proof follows.

\begin{lemma}\label{lmGDFD}
The family of functions $\{g_d(x)\}_{d \geq 1}$ can be expressed using only Chebyshev polynomials of the second kind as follows:
\begin{equation}
g_d(x)=2 \frac{\displaystyle \sum_{k=0}^{d-1} U_k(x)}{U_d(x)}
\label{CHEGU1}
\end{equation}
In addition, $f_d$ and $g_d$ obey to the following recurrence relation:
\begin{equation}
g_1(x)=\frac{1}{x} \;\;\;\;,\;\; g_d(x)=x f_d(x)(2+g_{d-1}(x)) \;\;(d \geq 1)
\label{CHEGU2}
\end{equation}
\end{lemma}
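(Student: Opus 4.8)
The plan is to establish the closed form (\ref{CHEGU1}) first, and then obtain the recurrence (\ref{CHEGU2}) as an almost immediate consequence. For (\ref{CHEGU1}), I would evaluate every quantity on the arc $x=\cos\theta$ with $\theta\in(0,\pi)$, where the trigonometric forms $U_k(\cos\theta)=\frac{\sin((k+1)\theta)}{\sin\theta}$, $T_m(\cos\theta)=\cos(m\theta)$, $V_m(\cos\theta)=\frac{\cos((2m+1)\theta/2)}{\cos(\theta/2)}$ and $W_m(\cos\theta)=\frac{\sin((2m+1)\theta/2)}{\sin(\theta/2)}$ are available. Since both sides of (\ref{CHEGU1}) are rational functions of $x$, it suffices to prove the identity on this arc; it then persists on $[1,\infty[$ by the identity principle.

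The engine of the computation is the partial sum of Chebyshev polynomials of the second kind. Using the elementary identity $\sum_{j=1}^{N}\sin(j\theta)=\frac{\sin(N\theta/2)\sin((N+1)\theta/2)}{\sin(\theta/2)}$, I would record the two parity-dependent product identities
\[
\sum_{k=0}^{2m-1}U_k=U_{m-1}W_m,\qquad \sum_{k=0}^{2m}U_k=U_mW_m,
\]
which follow at once after dividing by $\sin\theta$. I would combine these with the standard factorizations $U_{2m}=V_mW_m$ and $U_{2m+1}=2T_{m+1}U_m$ (the latter being the identity $U_{2n-1}=2T_nU_{n-1}$ already used in the paper). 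For even $d=2m$ the common factor $W_m$ cancels and $2\sum_{k=0}^{d-1}U_k/U_d$ collapses to $2U_{m-1}/V_m$; for odd $d=2m+1$ the common factor $U_m$ cancels and the ratio collapses to $W_m/T_{m+1}$. These are exactly the two branches of the definition (\ref{gddef}), which proves (\ref{CHEGU1}).

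With (\ref{CHEGU1}) in hand, (\ref{CHEGU2}) becomes purely algebraic. The base case is immediate since $g_1=W_0/T_1=1/x$. For the recurrence I would substitute $f_d(x)=\frac{U_{d-1}(x)}{xU_d(x)}$ and $g_{d-1}(x)=2\sum_{k=0}^{d-2}U_k/U_{d-1}$ into the right-hand side $xf_d(2+g_{d-1})$; the factor $U_{d-1}$ cancels, and after using $U_{d-1}+\sum_{k=0}^{d-2}U_k=\sum_{k=0}^{d-1}U_k$ the expression becomes $2\sum_{k=0}^{d-1}U_k/U_d=g_d$, as required.

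The main obstacle is bookkeeping rather than anything conceptual: the half-angle arguments $(2m+1)\theta/2$ appearing in $V_m$ and $W_m$ must be matched carefully against the arguments produced by the sum-of-sines formula, and the even and odd cases of $d$ must be treated separately. Once the two product identities for $\sum U_k$ are secured, every remaining step is a cancellation, and the recurrence (\ref{CHEGU2}) requires no new identities at all.
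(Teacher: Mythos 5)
Your proposal is correct and follows essentially the same route as the paper: both rest on the Lagrange sum-of-sines identity to evaluate $\sum_{k=0}^{d-1}U_k$ on $x=\cos\theta$, split into even and odd $d$ to identify the result with $2U_{d/2-1}/V_{d/2}$ or $W_{(d-1)/2}/T_{(d+1)/2}$, and extend to $[1,\infty[$ by analytic continuation, with (\ref{CHEGU2}) then following by direct cancellation from (\ref{CHEGU1}). Your packaging of the computation as the factorizations $\sum_{k=0}^{2m-1}U_k=U_{m-1}W_m$, $\sum_{k=0}^{2m}U_k=U_mW_m$ together with $U_{2m}=V_mW_m$ and $U_{2m+1}=2T_{m+1}U_m$ is just a tidier organization of the same argument, and you correctly supply the algebra for (\ref{CHEGU2}) that the paper leaves implicit.
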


\begin{proof}

The proof of establishing Equation (\ref{CHEGU1}) is mainly based on Lagrange's trigonometric identity \\ $\sum_{k=0}^n sin(k\theta)=\frac{sin(\frac{(n+1)\theta}{2})sin(\frac{n \theta}{2})}{sin(\frac{\theta}{2})}$. Therefore we obtain:

\begin{equation}
2 \frac{\displaystyle \sum_{k=0}^{d-1} U_k(cos(\theta))}{U_d(cos(\theta))} =\frac{2sin(\frac{(d+1)\theta}{2})sin(\frac{d \theta}{2})}{sin((d+1)\theta)sin(\frac{\theta}{2})}=\frac{sin(\frac{d \theta}{2})}{cos(\frac{(d+1)\theta}{2})sin(\frac{\theta}{2})}
\label{CHEGU3}
\end{equation}

Using the trigonometric definitions of the Chebyshev polynomials of the four kinds \cite{Mason20021} and based on the parity of $d$, Equation (\ref{CHEGU1}) can be established over the interval [-1,1], except for the isolated singularities. The equality can be extended to $[1,\infty[$ by analytic continuation. 
Equation (\ref{CHEGU2}) is a direct consequence of Equation (\ref{CHEGU1}). 

\end{proof}

From (\ref{CHEGU2}), it is possible to  derive the pointwise limit of $g_d(x)$ for $x>1$ which is $g(x)=\sqrt{\frac{x+1}{x-1}}-1$. 
Before expressing $g_d$ as a power series, we introduce the concept of Symmetric Dyck path \cite{DengDeng,Broadway,Lipton}. A Symmetric Dyck path of semilength $n$ and of maximum height $h$ is  a Dyck path of semilength $n$ and of maximum height $h$ which is symmetrical from the line $x=n$. The number of Symmetric Dyck paths of semilength $n$ and of maximum height $h$ is denoted $\Delta^S_{n,h}$. From Figure 1,  We obtain $\Delta^S_{4,1}=1$, $\Delta^S_{4,2}=4$, $\Delta^S_{4,3}=5$ and $\Delta^S_{4,k}=6$ for $k\geq 4$. The initialization of the sequence is similar to the previous sequence with $\{\Delta^S_{0,h}=1\}_{h \geq 0}$ and $\{\Delta^S_{n,0}=0\}_{n \geq 1}$. It is well known \cite{Broadway} that $\Delta^S_{n,h}$ is equal to the central binomial coefficient $D_n={{n}\choose{\myfloor{\frac{n}{2}}}}$ when $h \geq n$ \cite{Ferrari}. A Symmetric Dyck path of semilength $n$ and of maximum height $h$ can be commonly decomposed in the following way: given $k \leq \myfloor{\frac{n}{2}}$, a path is composed of a Dyck path of semilength $k$ and of maximum height $h$, a step $(1,1)$, a (shifted) Symmetric Dyck path of semilength $n-1-2k$ and of maximum height $h-1$, a step $(1,-1)$ and finally the symmetric of the initial Dyck path of semilength $k$. The notable exception is the existence, when $n$ is even, of Symmetric Dyck paths of semilength $n$ and of maximum height $h$ composed of two symmetric Dyck paths of semilength $\myfloor{\frac{n}{2}}$. Given the convention $\{\Delta^S_{-1,h}=1\}_{h \geq 0}$ and conditioning with respect to the last return to the x-axis before $n$, we finally obtain the following identity \cite{Broadway,GuibertMansour}:

\begin{equation}
\Delta^S_{n,h}= \sum_{k=0}^{\myfloor{\frac{n}{2}}} \Delta_{k,h}\Delta^S_{n-1-2k,h-1}
\label{Deltasdelta}
\end{equation}

\begin{figure}[!h]
    \begin{center}
    \includegraphics[height= 5 cm,width=14 cm]{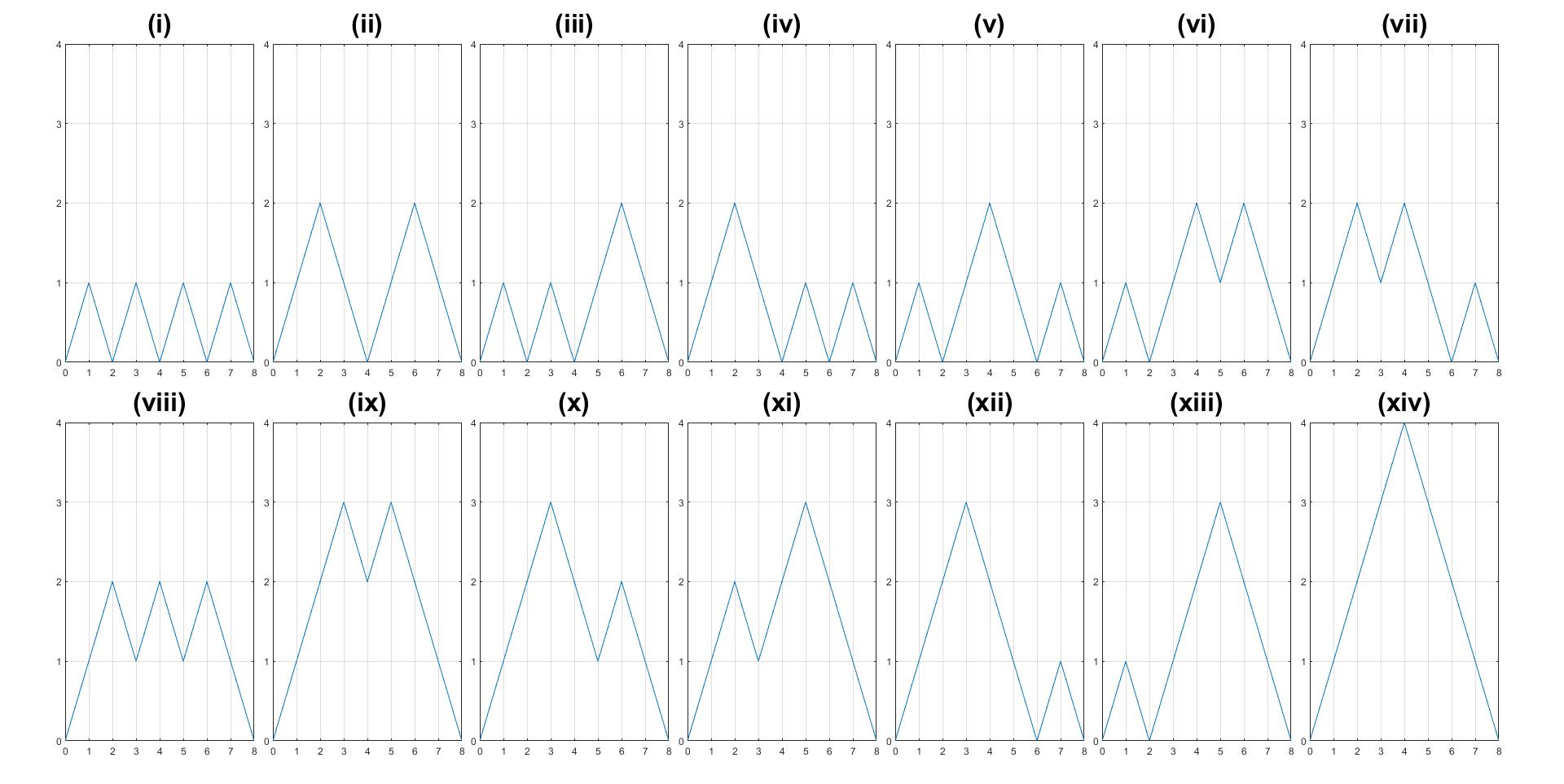}
\caption{List of Dyck paths of semilength 4}
\label{Dyckpath4}
    \end{center}
\end{figure}

The relationship between Symmetric Dyck paths and  Chebyshev polynomials has been discussed in \cite{Cigler,GuibertMansour}. Theorem \ref{th:Dyck2} presents an asymptotic expansion of $g_d$ expressed as a power series, followed by a proof.

\begin{theorem}\label{th:Dyck2}

We consider the family of functions $g_d(x)=\left\{    \begin{array}{lllllllllll} \frac{2U_{\frac{d}{2}-1}(x)}{V_{\frac{d}{2}}(x)} & \text{if   $d$ is even } \\     \frac{W_{\frac{d-1}{2}}(x)}{T_{\frac{d+1}{2}}(x)} & \text{if   $d$ is odd }\end{array}\right.$.  Let  $\Delta^S_{n,h}$ be the number of Symmetric Dyck paths of semilength $n$ and of maximum height $h$. Over the interval $[1,\infty[$, $g_d$ can be expressed using power series as follows:

\begin{equation}
g_d(x)=\sum_{i=0}^\infty \frac{\Delta^S_{i,d-1}}{2^{i}x^{i+1}}
\label{gdth}
\end{equation}

\end{theorem}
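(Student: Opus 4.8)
The plan is to mirror the inductive argument used for Theorem \ref{th:Dyck1}, but to drive the induction through the mixed recurrence relating $f_d$ and $g_d$ established in Lemma \ref{lmGDFD}. First I would settle the base case $d=1$: since $g_1(x)=\frac{1}{x}$ by (\ref{CHEGU2}), and since the initialization $\{\Delta^S_{0,h}=1\}_{h \geq 0}$ together with $\{\Delta^S_{n,0}=0\}_{n \geq 1}$ forces every term of $\sum_{i=0}^\infty \frac{\Delta^S_{i,0}}{2^i x^{i+1}}$ to vanish except the $i=0$ term, the claimed series collapses to $\frac{1}{x}$, matching $g_1$.

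For the inductive step I would assume $g_{d-1}(x)=\sum_{i=0}^\infty \frac{\Delta^S_{i,d-2}}{2^i x^{i+1}}$ and combine it with the expansion $f_d(x)=\sum_{i=0}^\infty \frac{\Delta_{i,d-1}}{2^{2i+1}x^{2i+2}}$ supplied by Theorem \ref{th:Dyck1}. Substituting both into the identity $g_d(x)=x f_d(x)(2+g_{d-1}(x))$ from (\ref{CHEGU2}) splits $g_d$ into the two pieces $2\,x f_d(x)$ and $x f_d(x)\,g_{d-1}(x)$. The first piece equals $\sum_{i \geq 0} \frac{\Delta_{i,d-1}}{2^{2i}\,x^{2i+1}}$ and contributes only to even-index coefficients; the second piece is a Cauchy product whose coefficient of $x^{-(n+1)}$ is $\frac{1}{2^{n}}\sum_{k=0}^{\myfloor{(n-1)/2}} \Delta_{k,d-1}\,\Delta^S_{n-1-2k,d-2}$, the powers of $2$ telescoping as $2^{2k+1}\cdot 2^{n-1-2k}=2^{n}$.

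Collecting both contributions, the coefficient of $x^{-(n+1)}$ in $g_d$ becomes $\frac{1}{2^{n}}\sum_{k=0}^{\myfloor{(n-1)/2}} \Delta_{k,d-1}\,\Delta^S_{n-1-2k,d-2}$, augmented by the isolated term $\frac{1}{2^{n}}\Delta_{n/2,d-1}$ exactly when $n$ is even. It then remains to recognize this combinatorial expression as $\Delta^S_{n,d-1}$. Here I would invoke the Symmetric Dyck path recurrence (\ref{Deltasdelta}) with $h=d-1$, whose summation runs up to $\myfloor{n/2}$. When $n$ is odd the two upper limits $\myfloor{(n-1)/2}$ and $\myfloor{n/2}$ coincide and no surplus term appears, so the identification is immediate; when $n$ is even, the extra index $k=n/2$ in (\ref{Deltasdelta}) contributes $\Delta_{n/2,d-1}\,\Delta^S_{-1,d-2}$, and the convention $\{\Delta^S_{-1,h}=1\}_{h \geq 0}$ makes this equal to $\Delta_{n/2,d-1}$, matching precisely the surplus term produced by the $2\,x f_d$ piece.

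The delicate point, and the step I expect to be the true obstacle, is this even/odd reconciliation: one must see that the diagonal contribution $2\,x f_d(x)$, which carries only even powers of $1/x$, is exactly what encodes the exceptional even-semilength symmetric paths that decompose into two mirror halves, namely the boundary term $k=\myfloor{n/2}$ of (\ref{Deltasdelta}) read under the convention $\Delta^S_{-1,h}=1$. Once this matching is secured, the coefficient equals $\frac{\Delta^S_{n,d-1}}{2^{n}}$ for every $n \geq 0$, which closes the induction and establishes (\ref{gdth}).
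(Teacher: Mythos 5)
Your proposal is correct and follows essentially the same route as the paper's proof: an induction on $d$ driven by the recurrence $g_d(x)=x f_d(x)(2+g_{d-1}(x))$ from Lemma \ref{lmGDFD}, the expansion of $f_d$ from Theorem \ref{th:Dyck1}, a Cauchy product, and the Symmetric Dyck path recurrence (\ref{Deltasdelta}) under the convention $\Delta^S_{-1,h}=1$. The only cosmetic difference is that the paper absorbs the constant $2$ as the $i=-1$ term of the series before multiplying, whereas you split it off as the piece $2xf_d(x)$ and reconcile the even-index surplus at the end; these are the same computation.
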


\begin{proof}

A similar approach using involutions, generating functions and Equation (\ref{CHEGU1}) has been presented in \cite{GuibertMansour}. Let us denote $\displaystyle \widehat{g_d}(x)=\sum_{i=0}^\infty \frac{\Delta^S_{i,d-1}}{2^{i}x^{i+1}}$. We will prove by induction that $\widehat{g_d}=g_d$. The case $d=1$ is immediate. Let us assume $\widehat{g_d}=g_d$ for $d$ greater or equal than 1 and we develop Equation (\ref{CHEGU2}) using the Cauchy product:

\begin{equation}
\begin{array}{llllll}
g_{d+1}&=& x f_{d+1}(x)(2+g_{d}(x)) \\
&=&\displaystyle x\left(\sum_{i=0}^\infty \frac{\Delta_{i,d}}{2^{2i+1}x^{2i+2}}\right)\left(2+\sum_{i=0}^\infty \frac{\Delta^S_{i,d-1}}{2^{i}x^{i+1}}\right) \\
&=&\displaystyle x\left(\sum_{i=0}^\infty \frac{\Delta_{i,d}}{2^{2i+1}x^{2i+2}}\right)\left(\sum_{i=-1}^\infty \frac{\Delta^S_{i,d-1}}{2^{i}x^{i+1}}\right)
= \displaystyle  \frac{1}{x} \left(\sum_{i=0}^\infty \frac{\Delta_{i,d}}{2^{2i}x^{2i}}\right)\left(\sum_{i=0}^\infty \frac{\Delta^S_{i-1,d-1}}{2^{i}x^{i}}\right)\\
&=& \displaystyle  \frac{1}{x} \left(\sum_{i=0}^\infty \sum_{k=0}^{\myfloor{\frac{i}{2}}} \frac{\Delta_{k,d}}{2^{2k}} \times \frac{\Delta^S_{i-1-2k,d-1}}{2^{i-2k}} x^{-i} \right) \\
&=& \displaystyle \sum_{i=0}^\infty \frac{\Delta^S_{i,d}}{2^{i}x^{i+1}} \\
&=& \widehat{g_{d+1}}
\end{array}
\label{cauchy2deltaS}
\end{equation}

\end{proof}

The sequence $\left\{\Delta^S_{n,h}\right\}_{\{n,h \geq 1\}}$ corresponds to the OEIS sequence {A94718}. Closed form expressions are presented in \cite{Cigler,Broadway}.

\end{document}